\theoremstyle{plain}
\newtheorem{thm}{Theorem}[section]
\newtheorem{prop}[thm]{Proposition}
\newtheorem{lem}[thm]{Lemma}
\newtheorem{fact}[thm]{Fact}
\newtheorem{clm}[thm]{Claim}
\theoremstyle{definition}
\newtheorem{defn}[thm]{Definition}
\newtheorem{eg}[thm]{Example}
\theoremstyle{remark}
\newtheorem{rem}[thm]{Remark}
\DeclareMathOperator{\mult}{mult}
\DeclareMathOperator{\Pic}{Pic}
\DeclareMathOperator{\Nef}{Nef}
\DeclareMathOperator{\Spec}{Spec}
\DeclareMathOperator{\Proj}{Proj}
\DeclareMathOperator{\codim}{codim}
\DeclareMathOperator{\Supp}{Supp}
\DeclareMathOperator{\conv}{conv}
\DeclareMathOperator{\rank}{rank}
\DeclareMathOperator{\id}{id}
\DeclareMathOperator{\Zeros}{Zeros}
\def\N{\mathbb{N}}
\def\Z{\mathbb{Z}}
\def\Q{\mathbb{Q}}
\def\R{\mathbb{R}}
\def\C{\mathbb{C}}
\def\r+{\mathbb{R}_{\geq 0}}
\def\ep{\varepsilon}
\def\mr{M_{\R}}
\def\r+{{\R}_{\geq 0}}
\def\proj{\mathbb{P}}
\def\rarw{\rightarrow}
\def\*c{\C^{\times}}
\def\zp{\N \setminus 0}
\def\mb{\mathbf{m}}
\def\epp{\ep(X_P,L_P;1_P)}
\newcommand{\calo}{\mathcal {O}}
\newcommand{\m}{\mathfrak{m}}
\begin{document}

\title{Seshadri constants via toric degenerations}
\author{Atsushi Ito}
\address{Graduate School of Mathematical Sciences, 
the University of Tokyo, 3-8-1 Komaba, Meguro-ku, Tokyo 153-8914, Japan.}
\email{itoatsu@ms.u-tokyo.ac.jp}

\begin{abstract}
We give a method to estimate Seshadri constants
on toric varieties at any point.
By using the estimations and toric degenerations,
we can obtain some new computations or estimations of Seshadri constants
on non-toric varieties.
In particular,
we investigate Seshadri constants on hypersurfaces in projective spaces
and Fano 3-folds with Picard number one in detail.
\end{abstract}

\subjclass[2010]{Primary 14C20; Secondary 14M25}
\keywords{Seshadri constant, toric variety, toric degeneration}

\maketitle


\section{Introduction}\label{intro}

Let $X$ be a projective variety and $p \in X$ a point. 
In \cite{Dem},
Demailly defined an interesting invariant which measures the positivity of nef line bundles on $X$ at the point $p$.

\begin{defn}\label{def of sc_at a point}
Let $L$ be a nef line bundle on a projective variety $X$,
and take a (possibly singular) closed point $p \in X$. 
The \textit{Seshadri constant} of $L$ at $p$ is defined to be 
$$\ep(X,L;p)=\ep(L;p) := \max\{ \, t \geq 0 \, | \, \mu^* L-tE \ \text{is nef} \, \},$$
where $\mu: \widetilde{X} \rarw X$ is the blowing up at $p$ and $E= \mu^{-1}(p)$ is the exceptional divisor.
\end{defn}

\begin{rem}
Note that we can define $\ep(X,L;p)$ for a nef $\Q$-line bundle $L$ by the same definition.
\end{rem}

This invariant has some interesting properties.
For example, lower bounds for Seshadri constants imply jet separation of adjoint linear series \cite{Dem} and
lower bounds for Gromov width (an invariant in symplectic geometry) \cite{MP}.
Upper bounds sometimes give fibrations or foliations \cite{Na2}, \cite{HK}.
Seshadri constants are used to define the Ross-Thomas' slope stabilities for polarized varieties \cite{RT}.

\vspace{1mm}
Unfortunately it is very difficult to compute or estimate Seshadri constants in general.
To obtain upper bounds for Seshadri constants,
we have to show the non-nefness of a line bundle.
Thus it suffices to find a curve which has a negative intersection number with the line bundle.
But for lower bounds,
we have to show the non-existence of such curves,
and this is very difficult in general.
Many authors study surfaces,
but estimates for the Seshadri constant in higher dimensional cases are rare.
In higher dimensional cases,
the following results are known.

In \cite{EKL},
Ein, K\"uchle, and Lazarsfeld
show that $ \ep(X,L;p) \geq 1 / \dim X$
holds for a very general point $p \in X$
for any polarized variety $(X,L)$.
Seshadri constants on abelian varieties are studied by \cite{Na1}, \cite{La1}, \cite{Bau}, etc.
Even in toric cases,
Seshadri constants are computed only at a torus invariant point \cite{Di}, \cite{BDH+}.

\vspace{2mm}
The starting point of the present paper is to study Seshadri constants at any point on a toric variety.
Let $M$ be a free abelian group of rank $n$ and set $\mr =M \otimes_{\Z} \R$.
For an integral polytope $P \subset \mr$ of dimension $n$,
we can define a polarized toric variety $(X_P,L_P)$,
which has an action by the torus $T_M \cong (\*c)^n$.
As we will see later in Proposition \ref{sc at any point},
we can reduce the study of $\ep(X_P,L_P,p)$ for $p \in X_P$ to the case $p$ is torus invariant,
or $p$ is contained in the maximal orbit $O_P \subset X_P$.
Since the former case is already studied,
it suffices to consider the latter case.
By the torus action,
$\ep(X_P,L_P,p)$ is constant for any $p \in O_P$.
Thus we may assume $p=1_P \in O_P$,
where $1_P$ is the identity of $O_P \cong T_M$.

The following theorem extends Theorem 2.2 in \cite{Ec} for toric surfaces to all higher dimensional toric varieties.

\begin{thm}[cf.\ Theorem \ref{estimation by projection}]\label{intro thm}
Let $P$ be a rational polytope of dimension $n$ in $\mr$.
Fix a surjective group homomorphism $\pi_{\Z} : M \rarw \Z$,
and let $\pi : \mr \rarw \R$ be the linear projection induced from $\pi_{\Z}$.
For $t \in \pi(P) \cap \Q$ such that $P(t):= P \cap \pi^{-1}(t)$ is $n-1$-dimensional,
we have
$$\ep(X_P,L_P;1_P) \geq \min\{ \, |\pi(P)|, \ep(X_{P(t)},L_{P(t)};1_{P(t)}) \, \},$$
where $|\pi(P)|$ is the standard Euclidian length of $\pi(P) \subset \R$.
\end{thm}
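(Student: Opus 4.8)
The plan is to convert the Seshadri constant at the generic torus point $1_P$ into a combinatorial interpolation problem and then estimate the interpolation degree by slicing in the $\pi$-direction. First I would use Demailly's jet-separation description \cite{Dem}, namely $\ep(X_P,L_P;1_P)=\lim_{k\to\infty}s(kL_P,1_P)/k$, where $s(kL_P,1_P)$ is the largest $s$ for which $kL_P$ separates $s$-jets at $1_P$. Since $H^0(X_P,kL_P)$ has the monomial basis $\{x^m\mid m\in kP\cap M\}$ and $1_P$ is the identity of the torus, writing $x^m=\exp\langle m,z\rangle$ in the formal coordinates $z=\log x$ shows that the $s$-jet of $x^m$ pairs with a differential operator $q(\partial)$ exactly through the value $q(m)$. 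Hence $kL_P$ separates $s$-jets iff no nonzero polynomial of degree $\le s$ vanishes on $kP\cap M$, so that $\ep(X_P,L_P;1_P)=\lim_k \delta(kP\cap M)/k$, where $\delta(S)$ denotes the least degree of a hypersurface through a finite set $S$; the same identity holds on the slice with the lattice $M'=\ker\pi_\Z$. This reduces the theorem to showing that every nonzero $p$ vanishing on $kP\cap M$ has $\deg p\ge k\min\{|\pi(P)|,\ep(X_{P(t)};1_{P(t)})\}-o(k)$.

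Next I would choose coordinates so that $\pi$ is the last coordinate $x_n$, and expand $p=\sum_{i\le e}c_i(x')x_n^{\,i}$ with $c_e\ne 0$ and $e=\deg_{x_n}p$. Two mechanisms then compete. \emph{Restricting to the slice} $\{x_n=kt\}$: if $p|_{x_n=kt}\not\equiv 0$, it vanishes on $kP(t)\cap M'$, forcing $\deg p\ge\delta(kP(t)\cap M')\approx k\,\ep(X_{P(t)};1_{P(t)})$, which is the second term of the minimum. \emph{Controlling the $\pi$-degree}: over any lattice point $\eta=x'$ for which $kP$ contains more than $e$ points in the $x_n$-direction, the univariate polynomial $p(\eta,x_n)$ of degree $\le e$ must vanish identically, so the leading coefficient $c_e$ vanishes on the lattice points of the ``$h$-tall region'' $\{x'\mid \text{the }x_n\text{-fibre of }P\text{ over }x'\text{ is longer than }h\}$ with $h=e/k$. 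Since $D=\deg p\ge\deg c_e+e$, this yields $D/k\gtrsim h+\ep(X_{\overline{(h\text{-tall region})}})$; as the occupied $\pi$-levels of $kP$ number about $k|\pi(P)|$, the $\pi$-degree $e$ is tied to $k|\pi(P)|$, which is the first term of the minimum.

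The hard part will be the interaction of the two mechanisms, precisely the configurations where $(x_n-kt)\mid p$, so that the slice restriction is vacuous on the chosen $t$, and simultaneously the tall region has degenerated. One must then either transfer the slice estimate to nearby $P(t')$ as $t'\to t$ — which needs (lower semi)continuity of $t'\mapsto\ep(X_{P(t')};1_{P(t')})$ inside a combinatorial chamber of the slicing, with care at the walls — or charge the degree spent on the factor $(x_n-kt)$ against the $|\pi(P)|$ budget so the minimum survives; concretely, the crux is the inequality $\min_h\bigl(h+\ep(X_{\overline{(h\text{-tall region})}})\bigr)\ge\min\{|\pi(P)|,\ep(X_{P(t)};1_{P(t)})\}$. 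I expect the cleanest route is to prove the a priori stronger statement with the projection polytope (the ``shadow'') $\overline{\pi_{M'}(P)}\subset M'_\R$ in place of $P(t)$, since $\overline{T_{M'}\cdot 1_P}$ is the toric variety of this shadow; because $P(t)$ projects into the shadow and more lattice points impose more interpolation conditions, one gets the monotonicity $\ep(X_{\overline{\pi_{M'}(P)}};1)\ge\ep(X_{P(t)};1_{P(t)})$, after which the theorem follows. Reconciling the tall-region estimate with the full shadow is the technical heart, and this is exactly the point where a degeneration argument, in the spirit of the paper's title, is the natural tool.
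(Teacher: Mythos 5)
Your reduction to an interpolation problem is legitimate and is genuinely different from the paper's argument: the paper never mentions jets, but instead builds a rational map $X_P\dashrightarrow X_{P(t)}$ from the semigroup inclusion $\Gamma_{P(t)}\hookrightarrow\Gamma_P$, resolves it, and splits curves through $1_P$ into those inside the fibre $\overline{T_{M'}}$ (whose normalization is $X_{\pi(P)}$, contributing $|\pi(P)|$) and those dominating $X_{P(t)}$ (handled by pulling back divisors in $|kL_{P(t)}\otimes\m_{1_{P(t)}}^{kt'}|$). Your first mechanism (restricting $p$ to the slice $x_n=kt$ when that restriction is nonzero) is fine. The problem is entirely in the second mechanism and in the two fallbacks you offer for the ``interaction'' case.

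The claim that ``the $\pi$-degree $e$ is tied to $k|\pi(P)|$'' is false, and with it the crux inequality. The $x_n$-degree of $p$ is controlled by the lengths of the individual fibres of the \emph{complementary} projection $\pi_{M'}$, not by $|\pi(P)|$, and for sheared polytopes these differ by an arbitrary factor. Take $P=\conv((0,0),(1,0),(N^2,N),(N^2+1,N))\subset\R^2$ with $\pi=$ second coordinate: here $|\pi(P)|=N$ and $\ep(X_{P(0)};1_{P(0)})=|P(0)|=1$, so the theorem asserts $\ep\geq 1$ (correctly: $P$ is a unimodular image of $[0,1]\times[0,N]$). But every fibre of $\pi_{M'}$ has length $1/N$, so the $h$-tall region is empty for all $h\geq 1/N$ and $\min_h\bigl(h+\ep(X_{\overline{R_h}})\bigr)=1/N\ll 1$; the leading-coefficient argument cannot rule out a hypothetical $p$ with $\deg p=e\approx k/N$ and constant $c_e$. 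The same family kills your proposed ``a priori stronger statement'' with the shadow $\pi_{M'}(P)$ in place of $P(t)$: for $P=\conv((0,0),(1,0),(N^2,N),(N^2+1,N))$ the shadow has length $N^2+1$, so that statement would give $\ep\geq\min\{N,N^2+1\}=N$, contradicting $\ep\leq\sqrt{L_P^2}=\sqrt{2N}$. (The inequality $\ep(X_{\mathrm{shadow}})\geq\ep(X_{P(t)})$ goes the wrong way for your purposes: it makes the target larger, and the larger target is false.) What remains of your plan is exactly the unresolved hard case: after factoring out the slices on which $p$ vanishes identically, you must compare $\delta$ of the surviving slice $P(s)$ with $\delta$ of the distinguished slice $P(t)$, i.e., you need a Lipschitz-type control of $t'\mapsto\ep(X_{P(t')};1_{P(t')})$ across the slicing, which is itself a statement of the same depth as the theorem one dimension down. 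This is precisely the step the paper's geometric construction avoids, by producing from a single divisor on $X_{P(t)}$ with high multiplicity at $1_{P(t)}$ a divisor on $X_P$ that imposes the multiplicity at $1_P$ on every curve not contained in the fibre.
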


Since $P(t)$ in the above theorem is an $n-1$-dimensional rational polytope,
we can obtain a lower bound for $\ep(X_P,L_P;1_P)$ inductively by this theorem.
The Eckl's proof relies on jet separation of linear systems.
On the other hand,
our proof is more geometric,
that is,
we use a rational map $X_P \dashrightarrow X_{P(t)}$ induced from the above data
to estimate intersection numbers with curves and line bundles.

We illustrate Theorem \ref{intro thm} by one example.

\begin{eg}\label{example1}
Let $P \subset \R^2$ be the convex hull of $(1,0),(0,1)$, and $(-1,-1)$ as in Figure \ref{figure1}.
Set $\pi : \R^2 \rarw \R$ to be the projection onto the second factor,
and take $t=0 \in \pi(P) \cap \Q$.

Since $\pi(P)=[-1,1]$ and $P(0)=P \cap \pi^{-1}(0) = [-1/2,1]$,
we have $|\pi(P)|=2$ and $\ep(X_{P(0)},L_{P(0)};1_{P(0)})=|P(0)|=3/2$.
Thus Theorem \ref{intro thm} says that
$$ \ep(X_P,L_P;1_P) \geq \min \{ 2,3/2 \}=3/2.$$
\end{eg}

\begin{figure}[htbp]
 \begin{center}
 \[
\begin{xy}
(10,0)="A",(0,10)="B",
(-10,-10)="C",
(9.9,0.1)="E",(-5,0.1)="F",
(9.8,-0.1)="I",(-5,-0.1)="J",
(-20,0)="1",(20,0)="2",
(0,-15)="3",(0,18)="4",
(50,-15)="7",(50,18)="8",
(11,3)*{1},(2,11)*{1},
(-12,-12)*{(-1,-1)},(-8,4)*{-\frac12},
(53,10)*{1},(53,0)*{0},(55,-10)*{-1},
(-15,18)*{\R^2},(55,18)*{\R}

\ar "1";"2"
\ar "3";"4"
\ar "7";"8"
\ar^{\pi} (27,0);(43,0)
\ar@{-} "A";"B"
\ar@{-} "B";"C"
\ar@{-} "C";"A"
\ar@{-} "E";"F"
\ar@{-} "I";"J"
\ar@{-} (9.9,.2);(-4.9,.2)
\ar@{-} (9.8,-.2);(-5,-.2)
\ar@{-} (49,10);(51,10)
\ar@{-} (49,0);(51,0)
\ar@{-} (49,-10);(51,-10)
\ar@{-} (49.9,10);(49.9,-10)
\ar@{-} (50.1,10);(50.1,-10)
\ar@{-} (49.8,10);(49.8,-10)
\ar@{-} (50.2,10);(50.2,-10)
\end{xy}
\]
 \end{center}
 \caption{}
 \label{figure1}
\end{figure}

\vspace{4mm}

Lower bounds in toric case sometimes give interesting lower bounds in non-toric case.
To state this,
we introduce the following notation.

\begin{defn}\label{def of sc_at a general pt}
Let $L$ be a nef line bundle on a projective variety $X$.
The Seshadri constant $\ep(X,L;1)$ of $L$ at a very general point is defined to be
$$ \ep(X,L;1) := \ep(X,L;p) $$
for a very general point $p \in X$.
Note that $\ep(X,L;1)$ is well-defined,
i.e.,
$\ep(X,L;p) $ is constant for a very general $p$ (see \cite[Example 5.1.11]{La2}).
\end{defn}

\vspace{2mm}
Since ampleness is an open condition in a flat family,
Seshadri constants are essentially lower semicontinuous.
Thus if we can degenerate a given polarized variety $(X,L)$ to a toric one,
we obtain a lower bound for $\ep(X,L;1)$ by Theorem \ref{intro thm}.

\vspace{1mm}
We illustrate this strategy with cubic surfaces.
Let $S \subset \proj^3$ be a very general cubic surface.
The surface $S$ degenerates to the hypersurface
$$S_0:=\Zeros(T_0^3 - T_1 T_2 T_3) \subset \proj^3,$$
where $T_0,\ldots,T_3$ are the homogeneous coordinates on $\proj^3$.
It is easy to see that $(S_0,\calo_{S_0}(1))$ is isomorphic to the polarized toric variety $(X_P,L_P)$
for $P \subset \R^2$ in Example \ref{example1}.
From the lower semicontinuity and Example \ref{example1},
we have $$\ep(S,\calo_S(1);1) \geq \ep(S_0,\calo_{S_0}(1);1) = \ep(X_P,L_P;1) \geq 3/2.$$
In fact,
it is well known
that $\ep(S,\calo_S(1);1) = 3/2$ holds (cf.\ \cite[Example 2.1]{ST}).

\vspace{1mm}
By similar arguments,
we can prove the following theorem.

\begin{thm}[cf.\ Proposition \ref{a point in c.i.}, Theorem \ref{sc of hypersurfaces}]\label{intro thm 3}
For a very general hypersurface $X \subset \proj^{n+1}$ of degree $d$,
we have
$$\lfloor \sqrt[n]{d} \rfloor \leq \ep(X,\calo_X(1);1) \leq \sqrt[n]{d} .$$
\end{thm}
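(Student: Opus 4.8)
The plan is to prove the two inequalities separately: the upper bound by a standard intersection-theoretic argument, and the lower bound by a toric degeneration combined with Theorem \ref{intro thm}.

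For the upper bound I would invoke the general inequality $\ep(X,L;p) \le \sqrt[n]{L^n}$ valid for any nef line bundle on an $n$-dimensional projective variety. Let $\mu : \widetilde X \rarw X$ be the blow-up at a smooth point $p$ with exceptional divisor $E \cong \proj^{n-1}$. If $\mu^*\calo_X(1) - tE$ is nef then its top self-intersection is non-negative; since $\mu^*\calo_X(1)|_E$ is trivial all mixed terms vanish and $E^n = (-1)^{n-1}$, so that $(\mu^*\calo_X(1)-tE)^n = \calo_X(1)^n - t^n$. As $\calo_X(1)^n = d$ for a degree-$d$ hypersurface, nefness forces $t \le \sqrt[n]{d}$, and taking the supremum gives $\ep(X,\calo_X(1);p) \le \sqrt[n]{d}$ at every smooth point, in particular at a very general one.

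For the lower bound, write $m = \lfloor \sqrt[n]{d}\rfloor$, so that $m^n \le d$. The idea is to degenerate $X$ inside $|\calo_{\proj^{n+1}}(d)|$ to the binomial hypersurface
$$X_0 = \Zeros\Big(T_0^d - \prod_{i=1}^{n+1} T_i^{a_i}\Big), \qquad \sum_{i=1}^{n+1} a_i = d,$$
for suitable positive integers $a_i$. Since ampleness is open in the flat family over $|\calo_{\proj^{n+1}}(d)|$, Seshadri constants are lower semicontinuous, so $\ep(X,\calo_X(1);1) \ge \ep(X_0,\calo_{X_0}(1);1)$. The hypersurface $X_0$ is toric: it is the closure of an $n$-dimensional torus orbit, isomorphic as a polarized variety to $(X_P,L_P)$ for the simplex $P = \conv\{\bar e_1,\dots,\bar e_{n+1}\}$ in $\R^{n+1}/\R a$. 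As a very general point of $X_0$ lies in the open orbit and $\ep$ is constant there, $\ep(X_0,\calo_{X_0}(1);1) = \ep(X_P,L_P;1_P)$, and it remains to bound the latter below by $m$ using Theorem \ref{intro thm}.

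To this end I would take $a_1 = 1$ and identify $P$ with $\conv\{-(a_2,\dots,a_{n+1}),\, e_1,\dots,e_n\} \subset \R^n$, for which $n!\vol P = 1 + \sum_{i\ge 2}a_i = d$. Projecting onto the last coordinate gives an interval of length $1+a_{n+1}$, and a direct computation shows the central fibre is again a simplex of the same shape in dimension $n-1$, with normalized volume $d/(1+a_{n+1})$. Iterating Theorem \ref{intro thm} thus reduces the problem to choosing the $a_i$ so that every projection width and the final one-dimensional length are at least $m$. Setting all widths equal to $m$ forces $a_{n+1}=m-1$, $a_n = m(m-1)$, and inductively $a_2 = d - m^{n-1}$, with remaining length $d/m^{n-1}$; positivity of the exponents and the inequality $d/m^{n-1}\ge m$ both hold exactly because $d \ge m^n$. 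The main obstacle is precisely this recursive bookkeeping: one must check that the fibres under projection remain within the family of simplices to which the inductive hypothesis applies, and that the arithmetic of the exponents can be met — the coincidence that feasibility is equivalent to $d \ge m^n$ is what pins the bound to $\lfloor\sqrt[n]{d}\rfloor$. A minor additional point is to choose the $a_i$ (e.g.\ with a suitable coprimality) so that the binomial is irreducible and $X_0$ is an integral degree-$d$ hypersurface, legitimizing the degeneration.
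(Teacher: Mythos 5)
Your argument is correct and is essentially the paper's own proof: the upper bound is the standard estimate $\ep(L;p)\le\sqrt[n]{L^n}$ (Fact \ref{well_known_facts}(1)), and the lower bound degenerates $X$ to a binomial toric hypersurface whose moment polytope is the simplex of Example \ref{ex of toric at maximal orbit} and then iterates Theorem \ref{intro thm}, making every slice width at least $m=\lfloor\sqrt[n]{d}\rfloor$. The only cosmetic differences are that the paper first degenerates $X_d$ to $X_{m^n}\cup X_{d-m^n}$ so as to use the clean exponents $(m-1)m^{n-i}$, whereas you absorb the excess $d-m^n$ into a single exponent (exactly as the paper does in Example \ref{examples of hypersurf at a point}), and that $(X_0,\calo_{X_0}(1))$ is in general only \emph{normalized} by $(X_P,L_P)$ rather than isomorphic to it --- which is harmless here since a very general point lies in the open orbit, where the normalization is an isomorphism.
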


\vspace{1mm}
\begin{rem}
In Example \ref{examples of hypersurf at a point},
we will show a refined statement of Theorem \ref{intro thm 3},
which recovers the above lower bound $3/2$ for cubic surfaces.
\end{rem}

\vspace{2mm}
Iskovskih classified smooth Fano 3-folds with Picard number $1$ into $17$ families in \cite{Is1}, \cite{Is2}.
By \cite{Gal}, \cite{CI}, and \cite{ILP}, 
toric degenerations of such 3-folds 
are well studied.
Using the degenerations,
we obtain the following explicit values.

\begin{thm}[=Theorem \ref{Fano 3-fold}]\label{intro thm 4}
For each family of smooth Fano 3-folds with Picard number $1$,
$\ep(X,-K_X;1)$ is as in Table \ref{table},
where $X$ is a very general member in the family.
We refer the reader to \cite{IsP} for the description of $X_{12}, X_{16},X_{18}$, and $X_{22}$.
\end{thm}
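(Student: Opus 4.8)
The plan is to treat the seventeen families one at a time, obtaining for each a lower bound from a toric degeneration and a matching upper bound from an explicit curve. The common input on the lower-bound side is that a very general member $X$ of each family admits a flat toric degeneration: by \cite{Gal}, \cite{CI}, and \cite{ILP}, such an $X$ degenerates to a (generally singular, Gorenstein) toric Fano 3-fold $X_P$, where $P$ is the reflexive polytope attached to the family and the anticanonical polarization $-K_X$ degenerates to $-K_{X_P}=L_P$. Since ampleness is an open condition in a flat family, Seshadri constants are lower semicontinuous, so a very general point of the special fibre gives
$$\ep(X,-K_X;1) \geq \ep(X_P,L_P;1) = \ep(X_P,L_P;1_P),$$
the last equality because a very general point of $X_P$ lies in the maximal orbit $O_P \subset X_P$, on which $\ep$ is constant.

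First I would compute the toric lower bound $\ep(X_P,L_P;1_P)$ for each reflexive $P$ by applying Theorem \ref{intro thm} inductively. For each $P$ I would select a surjection $\pi_\Z:M\rarw\Z$ so that the fibre polytopes $P(t)=P\cap\pi^{-1}(t)$ are two-dimensional and the lattice width $|\pi(P)|$ is as large as possible; the toric-surface Seshadri constant $\ep(X_{P(t)},L_{P(t)};1_{P(t)})$ can then be estimated by a second projection onto a one-dimensional polytope, where the constant equals the segment length $|P(t)|$. Carrying this out polytope by polytope produces an explicit rational lower bound for every family.

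Next I would establish the matching upper bounds on the non-toric side. To bound $\ep(X,-K_X;p)$ from above at a very general $p$ it suffices to exhibit an irreducible curve $C\subset X$ through $p$ with $(-K_X\cdot C)/\mult_p C$ equal to the tabulated value; the natural candidates are the lines, conics, and low-degree rational curves passing through a general point of each Fano 3-fold, whose anticanonical degrees and incidence behaviour are classically known from \cite{Is1}, \cite{Is2}, and \cite{IsP}. For several families a single line through $p$ already forces the sharp bound, while for others one needs a curve of higher degree with a prescribed node or tangency at $p$.

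The main obstacle I expect is exactly this upper-bound step. The lower bound is essentially uniform and mechanical once the degenerations and the inductive projection estimate are in place, whereas the upper bound demands detailed geometric control of which curves pass through a very general point of each specific family and with what multiplicity. In some cases the toric lower bound may not be sharp, and one must either refine the choice of projection in Theorem \ref{intro thm} or pass to a different degeneration whose polytope has larger lattice width in a suitable direction; reconciling the two sides family by family, so that the lower and upper bounds agree with the value recorded in Table \ref{table}, is the heart of the argument.
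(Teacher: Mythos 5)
Your lower-bound half coincides with the paper's: degenerate a very general member to the toric central fibres coming from \cite{Gal}, \cite{CI}, \cite{ILP} (for No.~11 the paper builds its own binomial degeneration, and for No.~2--4 it uses the complete-intersection degenerations of Proposition \ref{a point in c.i.}), then apply Lemma \ref{degeneration} and iterate Theorem \ref{estimation by projection} as in Example \ref{ex of toric at maximal orbit}. One small inaccuracy: the central fibres are not always Gorenstein toric Fanos with reflexive polytopes --- for No.~1 and No.~11 the moment polytopes have non-integral vertices, e.g.\ $\conv(e_1,e_2,e_3,-\tfrac13(e_1+e_2+e_3))$, and one works with $\Q$-polarized toric varieties --- but this does not affect the argument.

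The genuine gap is in the upper bounds, exactly where you locate the ``heart of the argument'' but then leave it unexecuted. Your proposed mechanism --- a line, conic, or low-degree rational curve through a very general point with classically known incidence behaviour --- does work for No.~5--10 (covered by conics) and No.~16--17 (covered by lines), and the paper argues exactly so. It does not deliver the other seven families, and the paper uses two quite different devices there. For the index-two families No.~11--15 no curve is exhibited at all: one assumes $\ep(X,-K_X;1)>2$, notes that the blow-up $\widetilde X$ of $X$ at a very general point would then be a Fano threefold of index $2$ and Picard number $2$ with $(-K_{\widetilde X})^3=8\bigl((\mu^*H-E)^3\bigr)=8(H^3-1)\le 8\cdot 4$, and contradicts the classification of del Pezzo threefolds, which allows only degrees $8\cdot 6$ and $8\cdot 7$ in Picard number two. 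For No.~1--4 the sharp values $6/5,4/3,3/2,2$ are not computed by lines or conics: Example \ref{examples of c.i. Fano at a point} constructs explicit complete-intersection curves of high degree with a point of large multiplicity at $p$ (on the quartic threefold, a curve of degree $8$ with a point of multiplicity $6$, giving $4/3$), and Example \ref{(6) in P(1,1,1,1,3)} handles No.~1 by a separate argument on a member of $|\calo_X(3)\otimes\m_p^4|$ pushed through the anticanonical double cover $X\rarw\proj^3$. Without these constructions the upper bounds for seven of the seventeen families remain unproven.
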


\begin{center}
\begin{longtable}{||c|c|c|p{7cm}|c||}
\hline

  No. & Index & $(-K_X)^3$ &
\begin{minipage}[c]{7cm}
\centering
\vspace{.2cm}

  Description

\vspace{.2cm}

\end{minipage}
  &
$\ep(X,-K_X;1)$
\\
  \hline
  \hline

  1 & 1 & 2 &
\begin{minipage}[c]{7cm}
\vspace{.1cm}

a hypersurface of degree $6$ in $\proj(1,1,1,1,3)$

\vspace{.1cm}

\end{minipage}
&
6/5
    \\
  \hline
  2 & 1 & 4 &
\begin{minipage}[c]{7cm}
\vspace{.1cm}

  the general element of the family is a quartic

\vspace{.1cm}

\end{minipage}
&
4/3
  \\
  \hline
  3 & 1 & 6 &
\begin{minipage}[c]{7cm}
\vspace{.1cm}

a complete intersection of a quadric and a cubic

\vspace{.1cm}

\end{minipage}
&
3/2
\\
  \hline
  4 & 1 & 8 &
\begin{minipage}[c]{7cm}
\vspace{.1cm}

a complete intersection of three quadrics

\vspace{.1cm}

\end{minipage}
&
2
\\
  \hline
  5 & 1 & 10 &
\begin{minipage}[c]{7cm}
\vspace{.1cm}

  the general element is
  a section of $G(2,5)$ in Pl\"{u}cker embedding by 2 hyperplanes and a quadric

\vspace{.1cm}

\end{minipage}
&
2
\\
  \hline
  6 & 1 & 12 &
\begin{minipage}[c]{7cm}
\vspace{.1cm}

  $X_{12} \subset \proj^8$

\vspace{.1cm}

\end{minipage}
&
2
\\
  \hline
  7 & 1 & 14 &
\begin{minipage}[c]{7cm}
\vspace{.1cm}

 a section of $G(2,6)$ by 5 hyperplanes in Pl\"{u}cker embedding

\vspace{.1cm}

\end{minipage}
&
2
\\
  \hline
  8 & 1 & 16 &
\begin{minipage}[c]{7cm}
\vspace{.1cm}

  $X_{16} \subset \proj^{10}$

  \vspace{.1cm}

\end{minipage}
&
2
\\
  \hline
  9 & 1 & 18 &
\begin{minipage}[c]{7cm}
\vspace{.1cm}

  $X_{18} \subset \proj^{11}$

\vspace{.1cm}

\end{minipage}
&
2
\\
  \hline
  10 & 1 & 22 &
\begin{minipage}[c]{7cm}
\vspace{.1cm}

  $X_{22} \subset \proj^{13}$

\vspace{.1cm}

\end{minipage}
&
2
\\
  \hline
  11 & 2 & $8\cdot 1$ &
\begin{minipage}[c]{7cm}
\vspace{.1cm}

 a hypersurface of degree $6$ in $\proj(1,1,1,2,3)$
 
 \vspace{.1cm}

\end{minipage}
&
2
\\
  \hline
  12 & 2 & $8\cdot 2$ &
  \begin{minipage}[c]{7cm}
\vspace{.1cm}

 a hypersurface of degree $4$ in $\proj(1,1,1,1,2)$

\vspace{.1cm}

\end{minipage}
&
2
\\
  \hline
  13 & 2 & $8\cdot 3$ &
  \begin{minipage}[c]{7cm}
\vspace{.1cm}

a cubic

\vspace{.1cm}

\end{minipage}
&
2
\\
  \hline
  14 & 2 & $8\cdot 4$ &
\begin{minipage}[c]{7cm}
\vspace{.1cm}

 a complete intersection of two quadrics

\vspace{.1cm}

\end{minipage}
&
2
\\
  \hline
  15 & 2 & $8\cdot 5$ &
\begin{minipage}[c]{7cm}
\vspace{.1cm}

 a section of $G(2,5)$ by 3 hyperplanes in Pl\"{u}cker embedding

\vspace{.1cm}

\end{minipage}
&
2
\\
  \hline
  16 & 3 & $27\cdot 2$ &
\begin{minipage}[c]{7cm}
\vspace{.1cm}

 a quadric

\vspace{.1cm}

\end{minipage}
&
3
\\
  \hline
  17 & 4 & $64 \cdot 1$ &
\begin{minipage}[c]{7cm}
\vspace{.1cm}

  $\proj^3$

\vspace{.1cm}

\end{minipage}
&
4
\\

 \hline

\caption{\label{table}Seshadri constants on Fano 3-folds with $\rho=1$}

\end{longtable}
\end{center}

The definition of Seshadri constants can be easily generalized to multi-point cases 
(cf.\ \cite[Definition 5.4.1]{La2}, \cite[Definition 1.9]{BDH+}).
We define the multi-point version at very general points.

\begin{defn}\label{def of sc_multi}
Let $L$ be a nef line bundle on a projective variety $X$.
For a positive integer $r $
and $\mb=(m_1,\ldots,m_r) \in \r+^r \setminus 0$,
the Seshadri constant $\ep(X,L; \mb)$ of $L$ at very general points with weight $\mb$ is defined to be
$$\ep(X,L; \mb) := \max\{ \, t \geq 0 \, | \, \mu^* L-t \, \sum_{i=1}^r m_i E_i \ \text{is nef} \, \} ,$$
where $\mu: \widetilde{X} \rarw X$ is the blowing up at very general $r$ points $p_1,\ldots,p_r \in X$ and $E_i= \mu^{-1}(p_i)$
is the exceptional divisor over $p_i$.
\end{defn}

Theorem \ref{intro thm 3} can be generalized to the multi-point case as follows.

\begin{thm}[=Theorem \ref{sc of hypersurfaces}]\label{intro thm 5}
For a very general hypersurface $X \subset \proj^{n+1} $ of degree $d$,
we have
$$\lfloor \sqrt[n]{d / (m_1^n+\cdots + m_r^n)} \rfloor \leq \ep(X,\calo_X(1);\mathbf{m}) \leq \sqrt[n]{d / (m_1^n+\cdots + m_r^n)} $$
for any $\mathbf{m}=(m_1,\ldots,m_r)  \in \N^r \setminus 0.$
\end{thm}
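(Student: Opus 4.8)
The plan is to prove the two inequalities separately: the upper bound by an elementary self-intersection computation on the blow-up, and the lower bound by degenerating $X$ to a toric hypersurface and feeding the toric estimate of Theorem \ref{intro thm} into the lower semicontinuity of Seshadri constants.

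For the upper bound, I would pass to the blow-up $\mu:\widetilde{X}\rarw X$ at the very general points $p_1,\dots,p_r$, with exceptional divisors $E_i$, and recall the intersection numbers $(\mu^*\calo_X(1))^n=\calo_X(1)^n=d$ and $E_i^n=(-1)^{n-1}$, together with the vanishing of every monomial in which $\mu^*\calo_X(1)$ meets some $E_i$ and of every product $E_iE_j$ with $i\neq j$ (by the projection formula and the disjointness of the $E_i$). Since a nef divisor on an $n$-dimensional projective variety has nonnegative top self-intersection, for every $t$ with $\mu^*\calo_X(1)-t\sum_i m_iE_i$ nef I obtain $0\le(\mu^*\calo_X(1)-t\sum_i m_iE_i)^n=d-t^n\sum_i m_i^n$, and rearranging gives $t\le\sqrt[n]{d/(m_1^n+\cdots+m_r^n)}$, which is the asserted upper bound.

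For the lower bound, set $k:=\lfloor\sqrt[n]{d/(m_1^n+\cdots+m_r^n)}\rfloor$, so that $k$ is a nonnegative integer with $\sum_i(km_i)^n\le d$; it suffices to show $\ep(X,\calo_X(1);\mb)\ge k$, i.e.\ that $\mu^*\calo_X(1)-k\sum_i m_iE_i$ is nef. Here I would invoke the lower semicontinuity of Seshadri constants in a flat family discussed in the introduction: degenerating the very general hypersurface $X$ to a binomial, hence toric, hypersurface $X_0=\Zeros(T_0^{d}-T_1^{a_1}\cdots T_{n+1}^{a_{n+1}})$ with $a_1+\cdots+a_{n+1}=d$, identified with a polarized toric variety $(X_P,L_P)$ whose defining $n$-simplex $P$ has normalized volume $\vol_{\Z}(P)=L_P^n=d$, reduces the claim to the toric estimate $\ep(X_P,L_P;\mb)\ge k$ at $r$ very general (hence torus-generic) points. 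The single-point instance of this, obtained by iterating the projection estimate of Theorem \ref{intro thm} on $P$, is exactly what yields the $r=1$ bound $\lfloor\sqrt[n]{d}\rfloor$.

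The hard part is the genuinely multi-point toric estimate. A purely volumetric packing of $r$ lattice simplices of sizes $km_1,\dots,km_r$ into $P$ would only give the far weaker condition $k\sum_i m_i\le\sqrt[n]{d}$ in first powers, whereas the theorem needs the sharp condition $\sum_i(km_i)^n\le d$ in $n$-th powers; this is why the width-insensitive packing cannot suffice and the projection mechanism of Theorem \ref{intro thm}, which is volume-sensitive, must be run simultaneously at all $r$ points. Concretely, I would slice $P$ by a rational projection $\pi$ and follow the $r$ general points through the induced rational map $X_P\dashrightarrow X_{P(t)}$, so that the contributions of the points are distributed across the slices and the budget $\sum_i(km_i)^n\le d$ is consumed additively as the induction descends to the one-dimensional base, where the multi-point Seshadri constant of $(\proj^1,\calo(d))$ is exactly $d/\sum_i m_i$, matching the $n=1$ formula. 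Establishing this multi-point propagation of the estimate—rather than merely the single-point version already in hand—is the main obstacle, and the floor in the statement reflects the passage from the real quantity $\sqrt[n]{d/\sum_i m_i^n}$ to the integer $k$ actually realized on a lattice simplex.
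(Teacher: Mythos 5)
Your upper bound is correct and is exactly the paper's (it is Fact \ref{well_known_facts}(1), i.e.\ the nonnegativity of the top self-intersection of a nef class on the blow-up). The problem is in the lower bound. You propose to degenerate $X$ to a single irreducible toric hypersurface and then prove a genuinely multi-point Seshadri estimate $\ep(X_P,L_P;\mb)\ge k$ at $r$ very general points of the toric variety by ``running the projection mechanism simultaneously at all $r$ points.'' No such multi-point toric estimate exists in the paper, and you do not supply one: Theorem \ref{estimation by projection} only controls the single point $1_P$ (equivalently, by torus-equivariance, one very general point), and it is far from clear how to propagate the slicing argument when several general points sit in different fibers of $X_P\dashrightarrow X_{P(u')}$ --- indeed, multi-point Seshadri constants at very general points of a toric variety are in general as hard as Nagata-type problems. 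You correctly identify this as ``the main obstacle,'' but flagging the obstacle is not the same as overcoming it, so as written the lower bound is not proved.

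The paper sidesteps this entirely by a different degeneration: set $c=\lfloor\sqrt[n]{d/\sum_i m_i^n}\rfloor$, choose a partition $d=d_1+\cdots+d_r$ with $d_i\ge (cm_i)^n$ (possible precisely because $d\ge\sum_i(cm_i)^n$), and degenerate the irreducible $X^n_d$ to the \emph{reducible} union $\bigcup_{i=1}^r X^n_{d_i}$. Lemma \ref{degeneration} permits a reducible central fiber and lets you place the $i$-th point on the $i$-th component with weight $cm_i$; on each component only the \emph{single-point} estimate $\ep(X^n_{d_i},\calo(1);1)\ge cm_i$ of Proposition \ref{a point in c.i.} is needed (note $cm_i$ is an integer, so that proposition applies). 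This reduction of the multi-point problem to $r$ independent single-point problems on distinct components is the key idea your proposal is missing; without it, or without an actual proof of your multi-point toric claim, the argument does not close.
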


\vspace{1mm}
This paper is organized as follows. In Section 2, we recall some notations, conventions,
and well known facts about Seshadri constants.
In Section 3, we examine Seshadri constants on toric varieties
and establish Theorem \ref{intro thm}.
In Section 4, we prove Theorems \ref{intro thm 3}, \ref{intro thm 4}, and \ref{intro thm 5}.
Throughout this paper,
we consider varieties or schemes over the complex number field $\C$.

\subsection*{Acknowledgments}
The author would like to express his gratitude to his advisor Professor Yujiro Kawamata 
for his valuable advice, comments, and warm encouragement.
He is grateful to Professor Robert Lazarsfeld
for reading the draft and giving me helpful advice and comments.
He wishes to thank Professors Yoshinori Gongyo, Yasunari Nagai, Shinnosuke Okawa, Hiromichi Takagi, and Kiwamu Watanabe
for useful comments and suggestions.
He would also like to thank Makoto Miura and Taro Sano
for helpful comments.

The author was supported by the Grant-in-Aid for Scientific Research
(KAKENHI No. 23-56182) and the Grant-in-Aid for JSPS fellows.

\section{Preliminary}\label{notation}

\subsection{Notations and conventions}
We denote by $\N,\Z,\Q,\R$, and $\C$ the set of all 
natural numbers, integers, rational numbers, real numbers, and complex numbers respectively.
In this paper, $\N$ contains $0$.
We define $\r+:=\{ x \in \R \,  | \, x \geq 0\}$.
For $x \in \R$,
$\lfloor x \rfloor,\lceil x \rceil \in \Z$ are the round down and the round up of $x$ respectively.
We denote by $e_1,\ldots,e_n$ the standard basis of $\Z^n$ or $\R^n$.

Unless otherwise stated,
$M$ stands for a free abelian group of rank $n \in \N$ throughout this paper.
We define $M_K:=M \otimes_{\Z} K$ for any field $K$.
For a subset $S \subset \mr$, we denote the convex hull of $S$ by $\conv(S)$.
We write $\Sigma (S)$ for the closed convex cone $S$ spans.
For $t \in \R$,
$tS:=\{tu \, | \, u \in S \}$.
For $u \in \mr$,
$S+u:=\{u' + u \, | \, u' \in S \}$ is
the parallel translation of $S$ by $u$.

A subset $P \subset \mr$ is called a \textit{polytope} if it is the convex hull of a finite set in $\mr$.
A polytope $P$ is \textit{integral} (resp.\ \textit{rational}) if all vertices are in $M$ (resp.\ $M_{\Q}$).
The \textit{dimension} of $P$ is the dimension of the affine space spanned by $P$.
When $\sigma$ is a face of a polytope $P \subset \mr$,
we write $\sigma \prec P$.
For a polytope $P \subset \mr$ with $\rank M=1$,
we denote by $|P|_M$ or $|P|$ the standard Euclidean length of $P$
under an identification of $M \subset \mr$ with $\Z \subset \R$.

For free abelian groups $M$ and $M'$ of rank $n$ and $r$,
a linear map $\pi:\mr \rarw M'_{\R}$
is called a \textit{lattice projection}
if $\pi$ is induced from a surjective group homomorphism $M \rarw M'$.

For a variety $X$, we say a property holds at a \textit{very general} point of $X$
if it holds for all points in the complement of the union of countably many proper subvarieties.

Throughout this paper, a divisor means a Cartier divisor.
We use the words ``divisor'',``line bundle'', and ``invertible sheaf'' interchangeably.

We call a pair $(X,L)$ a ($\Q$-)\textit{polarized variety} 
if $X$ is a projective variety and $L$ is an ample ($\Q$-)line bundle on $X$.

\subsection{Seshadri constants}
We refer the reader to \cite[Chapter 5]{La2} for the basic treatment of Seshadri constants.
The following facts are well known and follow easily from the definition of Seshadri constants.
We will use these facts later repeatedly.
\begin{fact}\label{well_known_facts}
Let $L$ be a nef line bundle on a projective variety $X$.
Then the following hold.
\begin{itemize}
\item[(1)] We have an inequality $\ep(L;1) \leq \sqrt[n]{L^n}$, 
where $n$ is the dimension of $X$.
Similarly,
$\ep(L;\mathbf{m}) \leq \sqrt[n]{L^n/ (m_1^n+\cdots+m_r^n)}$ holds for any $\mb=(m_1,\ldots,m_r) \in \r+^r \setminus 0$.
\item[(2)] For a subvariety $Y$ of $X$,
$\ep(X,L;p) \leq \ep(Y,L |_Y;p) $ holds for any $p \in Y \subset X$.
\item[(3)] We have
\begin{center}
$\displaystyle \ep(X,L;p)=\inf_C \left\{\dfrac{C.L}{\mult_p(C)}\right \}$,
\end{center}
where $C$ moves all reduced and irreducible curves on $X$ passing through $p$,
and $\mult_p(C)$ is the multiplicity of $C$ at $p$.
\end{itemize}
\end{fact}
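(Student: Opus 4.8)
The plan is to establish the curve characterization (3) first, since both of the other assertions follow quickly once it is in hand, and then to treat the self-intersection bound (1) by a separate computation on the blow-up. Throughout, write $\mu \colon \widetilde{X} \to X$ for the blow-up at $p$ with exceptional divisor $E$, and let $n = \dim X$.

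For (3), the two facts driving everything are that $\mu^* L|_E$ is trivial (it is pulled back from the point $p$) and that for a reduced irreducible curve $C \subset X$ through $p$ with strict transform $\widetilde{C}$ one has $E \cdot \widetilde{C} = \mult_p(C)$, while $\mu^* L \cdot \widetilde{C} = L \cdot C$ by the projection formula. For the bound $\ep(X,L;p) \le \inf_C (C.L)/\mult_p(C)$ I would note that if $\mu^* L - tE$ is nef, then testing against $\widetilde{C}$ gives $L.C - t\,\mult_p(C) \ge 0$, hence $t \le (C.L)/\mult_p(C)$ for every such $C$. For the reverse inequality I would set $t_0 := \inf_C (C.L)/\mult_p(C)$ and verify that $\mu^* L - t_0 E$ is nef by intersecting with an arbitrary irreducible curve $\Gamma \subset \widetilde{X}$: if $\Gamma \subset E$ then $\mu^* L \cdot \Gamma = 0$ while $E \cdot \Gamma < 0$ (as $-E$ is $\mu$-ample), so the intersection is $\ge 0$ for $t_0 \ge 0$; and if $\Gamma \not\subset E$ then $\Gamma$ is the strict transform of $C := \mu(\Gamma)$, so the intersection equals $L.C - t_0\,\mult_p(C) \ge 0$ by definition of $t_0$ (the case $p \notin C$ being immediate from nefness of $L$). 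Since nefness is exactly non-negativity against all irreducible curves, this gives $t_0 \le \ep(X,L;p)$, and the two bounds combine to the claimed equality.

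Assertion (2) is then immediate from (3): any reduced irreducible curve $C \subset Y$ through $p$ is also such a curve in $X$, and both $C.L = C.(L|_Y)$ and $\mult_p(C)$ are computed intrinsically on $C$, so the infimum defining $\ep(X,L;p)$ ranges over a family of curves containing the one defining $\ep(Y,L|_Y;p)$ and is therefore no larger. For (1), I would use that $\mu^* L - \ep(X,L;p)\,E$ is nef, because nefness is a closed condition and so the supremum in Definition \ref{def of sc_at a point} is attained; hence its top self-intersection is non-negative. Since a very general point is smooth, $E \cong \proj^{n-1}$ with $\calo_E(E) = \calo_{\proj^{n-1}}(-1)$, so all mixed terms $(\mu^* L)^{n-i} \cdot E^i$ with $0 < i < n$ vanish and $E^n = (-1)^{n-1}$; expanding yields $(\mu^* L - tE)^n = L^n - t^n$, and $L^n - \ep(X,L;p)^n \ge 0$ gives the bound. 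The multi-point statement is the same computation with disjoint exceptional divisors $E_1,\dots,E_r$, for which the cross terms $E_i \cdot E_j$ with $i \ne j$ also vanish, producing $L^n - t^n\sum_i m_i^n \ge 0$.

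The step needing the most care is the identity $E \cdot \widetilde{C} = \mult_p(C)$ underlying (3), which is where the geometry of the blow-up genuinely enters: at a smooth point it is the standard fact that the strict transform meets the exceptional divisor in $\mult_p(C)$ points counted with multiplicity, but at a possibly singular $p$ I would fix the convention for $\mult_p(C)$ precisely so that this remains valid. The only other routine point to check is that every irreducible curve in $\widetilde{X}$ not contained in $E$ is the strict transform of its image, which holds because $\mu$ is an isomorphism over $X \setminus \{p\}$.
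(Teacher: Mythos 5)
The paper offers no proof of this Fact; it is stated as well known with a reference to \cite[Chapter 5]{La2}, so there is nothing internal to compare against. Your argument is the standard one and is correct: the curve criterion (3) via testing $\mu^*L-t_0E$ against curves inside and outside $E$ (using that $-E$ is $\mu$-ample and that $E\cdot\widetilde{C}=\mult_p(C)$ in the Samuel-multiplicity convention), (2) as an immediate consequence, and (1) from $(\mu^*L-tE)^n=L^n-t^n\geq 0$ at a (necessarily smooth) very general point, with the multi-point case handled identically using disjoint exceptional divisors.
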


\section{Seshadri constants on toric varieties}

In this section, we investigate Seshadri constants on toric varieties
and prove Theorem \ref{intro thm}.
We treat non-normal toric varieties in this paper.
We refer the reader to \cite{Fu} for toric varieties.

\begin{defn}\label{def of toric}
Let $\Gamma \subset \N \times M$ be a finitely generated subsemigroup
such that $\Gamma \cap (\{0\} \times M) = \{0\}$ 
and $\Gamma$ generates $\Z \times M$ as a group.
We define a $\Q$-\textit{polarized toric variety} $(X(\Gamma),L(\Gamma))$ as follows.
$$ (X(\Gamma),L(\Gamma)):=(\Proj \C[\Gamma],\calo _{\Proj \C[\Gamma]}(1)).$$
The torus $T_M:= \Spec \C[M]$ naturally acts on $(X(\Gamma),L(\Gamma))$.

The \textit{moment polytope} $\Delta(\Gamma)$ of $(X(\Gamma),L(\Gamma))$ is defined to be 
$$\Delta(\Gamma):=\Sigma (\Gamma) \cap (\{1\} \times \mr) \subset \{1\} \times \mr,$$
which can be regarded as a rational polytope in $\mr$ naturally.

For a rational polytope $P \subset \mr$ of dimension $n$,
we define the \textit{normal $\Q$-polarized toric variety} $(X_P, L_P)$ by
$$(X_P,L_P):=(X(\Gamma_{P}),L(\Gamma_{P})),$$
where $\Gamma_{P}:=\Sigma (\{1\} \times P) \cap (\N \times M).$
We write the maximal orbit of $X_P$ as $O_P$,
and denote by $1_P \in O_P=T_M$
the identity of the torus.
For a face $\sigma$ of $P$,
there is a natural closed embedding $X_{\sigma} \hookrightarrow X_P$.
Hence we can regard $X_{\sigma}$ as a closed subvariety of $X_P$,
and $O_{\sigma}$ is considered as a $T_M$-orbit in $X_P$.

\end{defn}

\begin{rem}\label{normalization}
(1) For any semigroup $\Gamma$ as in Definition \ref{def of toric},
the normalization of $(X(\Gamma),L(\Gamma))$ is
$\mu : (X_{\Delta(\Gamma)},L_{\Delta(\Gamma)}) \rightarrow (X(\Gamma),L(\Gamma))$ induced by 
$\Gamma \hookrightarrow \Sigma (\Gamma) \cap (\N \times M)$ (see \cite[Exercise 4.22]{Ei}).
When $P$ is an integral polytope,
$L_P$ is a line bundle.\\
(2) For any integral polytope $P \subset \mr$ of dimension $n$
and any face $\sigma \prec P$,
$\ep(X_P,L_P;p)$ is constant for $p \in O_{\sigma}$ because of the torus action.
In particular,
$\ep(X_P,L_P;1)$ (in the sense of Definition \ref{def of sc_at a general pt})
coincides with $\ep(X_P,L_P;1_P)$.
\end{rem}

\subsection{At a point in the maximal orbit}

In this subsection,
we estimate $\epp$ for an integral polytope $P$.

\begin{lem}\label{orbit}
Let $\pi:\mr \rarw M'_{\R}$ be a lattice projection
with $\rank M=n, \rank M'=r$,
and $P \subset \mr$ an integral polytope of dimension $n$.
Then the closure $(\overline{T_{M'}}, L_P|_{\overline{T_{M'}}})$ of $T_{M'}$ in $X_P$
is a polarized toric variety
whose moment polytope is $\pi(P) \subset M'_{\R}$,
where $T_{M'} \hookrightarrow T_{M}=O_P \subset X_P$ is induced by the surjection $\pi | _{M} :M \rarw M'$.
 \end{lem}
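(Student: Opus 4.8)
The plan is to realize $\overline{T_{M'}}$ as an abstract polarized toric variety attached to a semigroup obtained by pushing $\Gamma_P$ forward along $\pi$, and then to read off its moment polytope directly from the cone it spans. Concretely, let $\Pi : \N \times M \rarw \N \times M'$ be the semigroup homomorphism $(k,m) \mapsto (k, \pi|_M(m))$, and set $\Gamma' := \Pi(\Gamma_P) \subset \N \times M'$. First I would check that $\Gamma'$ satisfies the hypotheses of Definition \ref{def of toric}: it is finitely generated, being the image of the finitely generated semigroup $\Gamma_P$; it generates $\Z \times M'$ as a group because $\Pi$ is surjective and $\Gamma_P$ generates $\Z \times M$; and $\Gamma' \cap (\{0\} \times M') = \{0\}$, since any $(0,m') \in \Gamma'$ lifts to some $(0,m) \in \Gamma_P$, which forces $m = 0$ and hence $m' = 0$. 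Thus $(X(\Gamma'), L(\Gamma'))$ is a well-defined $\Q$-polarized toric variety with torus $T_{M'}$, and the goal becomes to identify it with $(\overline{T_{M'}}, L_P|_{\overline{T_{M'}}})$ and to compute $\Delta(\Gamma') = \pi(P)$.

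Next I would produce the embedding into $X_P$. The map $\Pi$ induces a surjective homomorphism of $\N$-graded $\C$-algebras $q : \C[\Gamma_P] \twoheadrightarrow \C[\Gamma']$, $\chi^{(k,m)} \mapsto \chi^{(k,\pi|_M(m))}$. Writing $I = \ker q$, so that $\C[\Gamma'] \cong \C[\Gamma_P]/I$ as graded rings, passing to $\Proj$ realizes $X(\Gamma') = \Proj \C[\Gamma']$ as the closed subscheme $V_+(I) \subset \Proj \C[\Gamma_P] = X_P$, with the restriction of $\calo(1) = L_P$ equal to $\calo_{X(\Gamma')}(1) = L(\Gamma')$. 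It then remains to identify this closed subvariety with $\overline{T_{M'}}$, which I would do by restricting to the dense tori. Since $\Gamma_P$ and $\Gamma'$ generate $\Z \times M$ and $\Z \times M'$, the map $q$ localizes to $\C[\Z \times M] \rarw \C[\Z \times M']$ on the big tori of the affine cones, and after passing to $\Proj$ this is the map on coordinate rings $\C[M] \rarw \C[M']$, $\chi^m \mapsto \chi^{\pi|_M(m)}$. This is exactly the comorphism of the subtorus inclusion $T_{M'} \hookrightarrow T_M = O_P$ in the statement. Hence the embedding restricts on the maximal orbit $O_{\Gamma'} = T_{M'}$ of $X(\Gamma')$ to precisely that inclusion; since a toric variety is the closure of its torus, the image of $X(\Gamma')$ is $\overline{T_{M'}}$, carrying the polarization $L_P|_{\overline{T_{M'}}} = L(\Gamma')$.

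Finally I would compute the moment polytope. Because $\Gamma_P$ consists of the lattice points of the rational cone $\Sigma(\{1\} \times P)$, Gordan's lemma gives $\Sigma(\Gamma_P) = \Sigma(\{1\} \times P)$, and applying the linear map $\Pi$ yields $\Pi(\Sigma(\Gamma_P)) = \Sigma(\{1\} \times \pi(P))$, whence $\Sigma(\Gamma') \subseteq \Sigma(\{1\} \times \pi(P))$. For the reverse inclusion I use that $P$ is integral: each vertex $v$ of $P$ lies in $M$, so $(1,v) \in \Gamma_P$ and $(1,\pi(v)) \in \Gamma'$, and these points span $\Sigma(\{1\} \times \pi(P))$ because $\pi(P) = \conv(\pi(v))$. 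Thus $\Sigma(\Gamma') = \Sigma(\{1\} \times \pi(P))$, and intersecting with $\{1\} \times M'_{\R}$ gives $\Delta(\Gamma') = \pi(P)$. The step I expect to require the most care is the identification of the closed subscheme $V_+(I)$ with the geometric closure $\overline{T_{M'}}$: one must verify that taking $\Proj$ of the surjection $q$ is compatible with the two torus embeddings, so that the restricted polarization is genuinely $L(\Gamma')$ and not a twist of it. Once this compatibility is in place, the polytope computation in the last paragraph is routine.
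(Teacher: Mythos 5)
Your proposal is correct and follows essentially the same route as the paper: both push the semigroup $\Gamma_P$ forward along $\id_{\N}\times\pi|_M$ to get $\Gamma'$, realize $X(\Gamma')=\Proj\C[\Gamma']$ as a closed subvariety of $X_P$ via the induced surjection of graded rings, identify it with $\overline{T_{M'}}$ by restricting to the dense tori, and compute $\Delta(\Gamma')=\pi(P)$ from $\Sigma(\Gamma')=\Sigma(\{1\}\times\pi(P))$. Your write-up simply makes explicit the verifications (the hypotheses of Definition \ref{def of toric} for $\Gamma'$, the compatibility of the two torus embeddings, and the cone computation via the vertices of the integral polytope $P$) that the paper leaves implicit.
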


\begin{proof}
Consider the following commutative diagram
\[\xymatrix{
\Gamma_{P}=\Sigma (\{1\} \times P)\cap (\N \times M) \ar[d] \ar@{^{(}->}[r] \ar@{}[dr]|\circlearrowleft & \N \times M \ar@{->>}[d]^{\id_{\N} \times \pi |_M} \\
\Gamma_{\pi(P)}=\Sigma (\{1\} \times \pi(P)) \cap (\N \times M') \ar@{^{(}->}[r] & \N \times M' , \\
}\]
and set $\Gamma '= (\id_{\N} \times \pi |_{M})(\Gamma_{P})$.
The semigroup $\Gamma '$ generates $\Z \times M'$ as a group
and $\Delta(\Gamma')=\pi(P)$
because $\dim P=n$ and $\pi |_{M}:M \rarw M'$ is surjective.

The above diagram induces 
\[\xymatrix{
X_P=\Proj\C[\Gamma_{P} ]  \ar@{}[dr]|\circlearrowleft & T_M \ar@{_{(}->}[l]^(0.3){i} \\
X(\Gamma')=\Proj\C[\Gamma '] \ar@{^{(}->}[u]^{\iota } & T_{M'} \ar@{^{(}->}[u] \ar@{_{(}->}[l]^(0.3){i'}, \\
}\]
where $\iota $ is a closed embedding, and $i,i'$ are open immersions.
Therefore, we have $(\overline{T_{M'}},L_P|_{\overline{T_{M'}}})=(X(\Gamma '),L(\Gamma '))$,
whose moment polytope is $\Delta(\Gamma')=\pi(P)$.
\end{proof}

\vspace{1mm}

Let $\pi:\mr \rarw M'_{\R}$ and $P$ be as in Lemma \ref{orbit},
and set
$P(u') = \pi^{-1}(u') \cap P $ for $u' \in \pi(P) \cap M'_{\Q} .$

\begin{figure}[htbp]
 \begin{center}
\[
\begin{xy}
(20,3)="A",(35,13)="B",
(15,30)="C",(5,23)="D",
(5,10)="E",(5,20)="F",
(26.7,20)="G",
(-5,0)="1",(40,0)="2",
(0,-5)="3",(0,35)="4",
(70,-5)="7",(70,35)="8",
(74,20.5)*{u'},(18,15)*{P},
(-5,33)*{\mr},(76,33)*{\mr'},
(35.5,28)*{P(u')}

\ar "1";"2"
\ar "3";"4"
\ar "7";"8"
\ar^{\pi} (45,10);(60,10)
\ar@{-} "A";"B"
\ar@{-} "B";"C"
\ar@{-} "C";"D"
\ar@{-} "D";"E"
\ar@{-} "E";"A"
\ar@{-} "F";"G"
\ar@{-} (69,20);(71,20)
\ar@/_/ (30,28.5);(18,20.5)
\end{xy}
\]
 \end{center}
 \caption{}
 \label{figure2}
\end{figure}

An splitting $M \cong \ker \pi |_M \oplus M'$ of $0 \rarw \ker \pi |_M \rarw M \stackrel{\pi |_M}{\rarw} M' \rarw 0$
induces an identification of $\pi^{-1}(u')$ with $\ker \pi$,
hence we can consider $P(u')$ as a rational polytope in $\ker \pi =({\ker \pi |_M})_{\R}$.
Assume that the dimension of $P(u')$ is $n-r$.
Then $P(u')$ defines the $\Q$-polarized toric variety $(X_{P(u')},L_{P(u')})$,
and the isomorphic class of $(X_{P(u')},L_{P(u')})$
does not depend on the choice of the splitting $M \cong \ker \pi |_M \oplus M'$.

\begin{lem}\label{construction of fibration}
Let $\pi:\mr \rarw M'_{\R}$ and $P$ be as in Lemma \ref{orbit},
and take $u' \in \pi(P) \cap M'_{\Q} $
such that $\dim P(u')= n-r$. 
Then,
there exists a generically surjective rational map $\varphi  : X_P \dashrightarrow X_{P(u')}$
such that for any resolution $\mu : Y \rightarrow X_P $ of the indeterminacy of $\varphi $,
the following conditions hold.
\begin{itemize}
\item[(i)] $\mu^* L_P - f^* L_{P(u')}$ is $\Q$-effective, where $f=\varphi \circ \mu $,
\item[(ii)]  $\mu(f^{-1}(1_{P(u')})) \cap O_P=T_{M'}$ holds for $1_{P(u')} \in O_{P(u')} \subset X_{P(u')}$.

\[\xymatrix{
Y \ar[r]^{\mu} \ar[dr]_f & X_P \ar@{-->}[d]^{\varphi } \\
 & X_{P(u')} \\
}\]

\end{itemize}

\end{lem}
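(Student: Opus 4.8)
The plan is to construct the rational map $\varphi$ directly from the lattice projection $\pi$ and then verify the two conditions by tracking the associated semigroups and moment polytopes. First I would set up $\varphi$ as the composition of two natural maps: a projection $X_P \dashrightarrow \overline{T_{M'}}$ coming from the inclusion of the closure of $T_{M'}$ (Lemma \ref{orbit}) together with the complementary ``fiber'' data, and then identify the relevant fiber with $X_{P(u')}$. Concretely, after choosing a splitting $M \cong \ker\pi|_M \oplus M'$, the affine coordinates on $M$ split into coordinates along $M'$ (the base directions) and coordinates along $\ker\pi|_M$ (the fiber directions). The rational map $\varphi$ should record the fiber coordinates, i.e.\ it is induced by the inclusion of the semigroup $\Gamma_{P(u')}$ (for the polytope $P(u')$ sitting in $\ker\pi$) into a localization of $\Gamma_P$. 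I would make this precise by exhibiting a semigroup homomorphism or rational-function-field inclusion $\C[\Gamma_{P(u')}] \hookrightarrow \C(X_P)$ compatible with the gradings up to the choice of a reference monomial supported over $u'$.

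Next I would pass to a resolution $\mu : Y \to X_P$ of the indeterminacy locus of $\varphi$ and set $f = \varphi \circ \mu$; such a resolution exists by Hironaka. To verify condition (i), that $\mu^*L_P - f^*L_{P(u')}$ is $\Q$-effective, the key observation is that on the open torus both $L_P$ and $L_{P(u')}$ are trivialized, and the transition data comparing $\mu^*L_P$ with $f^*L_{P(u')}$ is governed by how the moment polytope $P(u') \subset \pi^{-1}(u')$ sits inside $P$. Since $P(u')$ is literally a slice of $P$, every lattice point contributing sections to $L_{P(u')}$ can be lifted to a lattice point of the cone $\Sigma(\{1\}\times P)$ by adding the vector over $u'$; this lifting is exactly what produces an effective divisor representing the difference. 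I would make the effectivity statement at the level of the monomial comparison and then conclude $\Q$-effectivity after clearing denominators coming from the rational slice $u' \in M'_\Q$ and from $P(u')$ being only a rational polytope.

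For condition (ii), I would compute the fiber $f^{-1}(1_{P(u')})$ over the torus identity of $X_{P(u')}$ and show that its image under $\mu$, intersected with the big orbit $O_P = T_M$, is exactly $T_{M'} \subset O_P$. The point is that $1_{P(u')}$ is the torus identity in the fiber directions, so requiring the $\ker\pi|_M$-coordinates to all equal $1$ cuts $T_M \cong (\*c)^n$ down to the subtorus where the fiber coordinates are trivial, which is precisely the image of $T_{M'}$ under the section determined by the splitting --- matching the subtorus $T_{M'} \hookrightarrow T_M$ of Lemma \ref{orbit}. I would verify this by intersecting $f^{-1}(1_{P(u')})$ with $O_P$ and checking the defining equations reduce to those cutting out $T_{M'}$, which is a direct character-lattice computation.

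The main obstacle I expect is handling the indeterminacy of $\varphi$ cleanly, since $\varphi$ is only a rational map and its behavior away from $O_P$ (on the boundary strata $X_\sigma$) is delicate. In particular I must ensure that condition (ii) is stated and verified only after intersecting with $O_P$, so that the exceptional locus of $\mu$ and the points where $\varphi$ is undefined do not corrupt the identification with $T_{M'}$; this is why the statement carefully writes $\mu(f^{-1}(1_{P(u')})) \cap O_P = T_{M'}$ rather than an equality of the full images. The other technical care is that $P(u')$ is a \emph{rational} polytope, so $L_{P(u')}$ is only a $\Q$-line bundle and the comparison in (i) must be read $\Q$-linearly; I would keep track of denominators throughout so that the final effectivity and fiber statements hold after passing to a suitable multiple.
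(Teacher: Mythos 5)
Your proposal follows essentially the same route as the paper: the map $\varphi$ is induced by the inclusion of the semigroup $\Gamma_{P(u')}$ (viewed in $\N\times\ker\pi|_M$ after translating so that $u'=0$, which is your ``reference monomial'' normalization) into $\Gamma_P$, condition (i) comes from comparing section rings / lifting lattice points of the slice into the cone over $P$, and condition (ii) is the character-lattice computation identifying $\varphi|_{T_M}^{-1}(1_{P(u')})$ with the subtorus $T_{M'}$. The argument is correct and matches the paper's proof in all essential respects.
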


\begin{proof}
By considering $kP$ for sufficiently large and divisible $k \in \N$,
we may assume that $u'$ is contained in $M'$ and $P(u')$ is an integral polytope.
Furthermore, by considering $P-u$ for $u \in (\pi |_M)^{-1}(u')$,
we may assume $u'=0 \in M'$.
Hence $P(u')$ is an integral polytope in $\pi^{-1}(0)=(\ker \pi |_M)_{\R}$.

Consider the following commutative diagram
\[\xymatrix{
\Gamma_P =\Sigma (\{1\} \times P) \cap (\N \times M)  \ar@{^{(}->}[r] \ar@{}[dr]|\circlearrowleft & \N \times M  \\
\Gamma_{P(u')}=\Sigma (\{1\} \times P(u'))\cap (\N \times \ker \pi |_M) \ar@{^{(}->}[u]^{\psi} \ar@{^{(}->}[r]
& \N \times \ker \pi |_M. \ar@{^{(}->}[u] \\
}\]
This commutative diagram induces 
\[\xymatrix{
X_P=\Proj\C[\Gamma_P ] \ar@{-->}[d]^{\varphi} \ar@{}[dr]|\circlearrowleft & O_P = T_M \ar@{_{(}->}[l] \ar@{->>}[d]^{\varphi |_{T_M}} \\
X_{P(u')}=\Proj\C[\Gamma_{P(u')}] & O_{P(u')}=T_{\ker{\pi |_M}}. \ar@{_{(}->}[l] \\
}\]
Then $\varphi$ is generically surjective because $\varphi|_{T_M}$ is surjective.
We show this $\varphi $ satisfies (i) and (ii) in the statement of this lemma.

Since $\varphi |_{T_M}^{-1}(1_{P(u')})=T_{M'}$,
(ii) is clear.

Let $\mu : Y \rightarrow X_P $ be a resolution of the indeterminacy of $\varphi $.
Both $X_P,X_{P(u')}$ are normal and
$\mu,f $ have connected fibers.
Thus we have
$$\bigoplus_{k \in \N} H^0(Y,k f^*L_{P(u')})=\bigoplus_{k \in \N} H^0(X_{P(u')},k L_{P(u')})=\Gamma_{P(u')},$$
$$\bigoplus_{k \in \N} H^0(Y,k \mu^*L_P) =\bigoplus_{k \in \N} H^0(X_P,k L_P) =\Gamma_P.$$
Therefore an injection $f^*L_{P(u')} \hookrightarrow \mu^*L_P$ is induced from the injection
$\psi: \Gamma_{P(u')} \rarw \Gamma_P$.
Hence (i) holds.
\end{proof}

We need one more lemma,
which states that lower and upper bounds for Seshadri constants are obtained from surjective morphisms.
Recall that for a line bundle $L$ on a projective variety,
the \textit{stable base locus} of $L$ is defined to be $\mathbf{B}(L) := Bs(|kL|)$ for sufficiently large and divisible $k \in \N$,
which does not depend on such $k$; see \cite[Remark 2.1.24]{La2}.

\begin{lem}\label{estimate by fibration}
Let $f:Y \rightarrow Z$ be a surjective morphism between projective varieties.
Assume that $L,L'$ are nef $\Q$-divisors on $Y,Z$ respectively such that $L-f^*L'$ is $\Q$-effective.
Then
$$\min \{ \min_{1 \leq i \leq r} \ep(Y_i,L|_{Y_i};y), \ep(Z,L';f(y)) \} \leq \ep(Y,L;y)
\leq \min_{1 \leq i \leq r} \ep(Y_i,L|_{Y_i};y)$$
holds for $y \not\in \mathbf{B}(L-f^*L')$,
where $Y_1,\ldots,Y_r$ are all the irreducible components of $f^{-1}(f(y))$ containing $y$ with the reduced structures.
\end{lem}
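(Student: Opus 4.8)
The plan is to use the characterization of Seshadri constants via curves (Fact \ref{well_known_facts}(3)) and to analyze separately the two kinds of irreducible curves through $y$: those contracted by $f$ and those mapping dominantly to their image in $Z$. Since $\ep(Y,L;y) = \inf_C \{ C.L / \mult_y(C) \}$ over irreducible curves $C$ through $y$, I would split the infimum accordingly and bound each piece.

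\textbf{Upper bound.} For the inequality $\ep(Y,L;y) \leq \min_i \ep(Y_i, L|_{Y_i}; y)$, I would simply invoke Fact \ref{well_known_facts}(2): each $Y_i$ is a subvariety of $Y$ containing $y$, so $\ep(Y,L;y) \leq \ep(Y_i, L|_{Y_i}; y)$, and taking the minimum over $i$ gives the claim. This direction requires no hypothesis on the base locus.

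\textbf{Lower bound.} Here I would first show that it suffices to bound $C.L / \mult_y(C)$ from below for each irreducible curve $C$ through $y$. Fix such a $C$. If $f(C)$ is a point, then $f(C) = f(y)$, so $C$ lies in $f^{-1}(f(y))$, hence in one of the $Y_i$; then $C.L = C.(L|_{Y_i})$ and the local multiplicity is computed on $Y_i$, giving $C.L/\mult_y(C) \geq \ep(Y_i, L|_{Y_i}; y) \geq \min_i \ep(Y_i, L|_{Y_i}; y)$. If instead $f(C)$ is a curve, I would use the factorization $L = f^*L' + D$ where $D$ is a $\Q$-effective divisor representing $L - f^*L'$. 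The key point is the base-locus hypothesis: since $y \notin \mathbf{B}(L - f^*L')$, I may choose the effective representative $D$ so that $y \notin \Supp(D)$, and then $C \not\subset \Supp(D)$ (as $C$ is irreducible through $y$), so $C.D \geq 0$ with $\mult_y$ on $D$ contributing nothing at $y$. Thus
$$\frac{C.L}{\mult_y(C)} \geq \frac{C.f^*L'}{\mult_y(C)} = \frac{f_*C . L'}{\mult_y(C)}.$$
Writing $f_*C = (\deg(C/f(C)))\, f(C)$ and using $\mult_{f(y)}(f(C)) \leq \deg(C/f(C)) \cdot \mult_y(C)$ (a curve can only lose multiplicity under a finite map onto its image), I would conclude $C.f^*L'/\mult_y(C) \geq f(C).L'/\mult_{f(y)}(f(C)) \geq \ep(Z,L';f(y))$. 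Taking the infimum over all $C$ then yields the lower bound.

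\textbf{Main obstacle.} The delicate step is the multiplicity comparison $\mult_{f(y)}(f(C)) \leq \deg(C/f(C)) \cdot \mult_y(C)$ together with the careful handling of the $\Q$-effective part $D$ near $y$. I must ensure the chosen effective representative of $L - f^*L'$ genuinely avoids $y$ — this is exactly what $y \notin \mathbf{B}(L - f^*L')$ guarantees, after passing to a sufficiently divisible multiple so that the linear system is base-point free at $y$. One must also be careful that $f_*C$ is an honest effective cycle on $Z$ of the right degree and that $C.f^*L' = f_*C . L'$ by the projection formula. I expect the projection-formula bookkeeping to be routine, but the multiplicity inequality under the proper pushforward is the conceptual heart of the argument.
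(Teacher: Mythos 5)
Your upper bound and your Case 1 (curves contracted by $f$) coincide with the paper's argument. For curves not contracted by $f$, however, you take a genuinely different route — pushing the curve forward and comparing multiplicities — and the key inequality you rely on is stated backwards and is false as written. You claim $\mult_{f(y)}(f(C)) \leq \deg(C/f(C)) \cdot \mult_y(C)$, justified by "a curve can only lose multiplicity under a finite map onto its image." Take $C \rarw C'=f(C)$ to be the normalization of a nodal curve and $y$ one of the two preimages of the node $z=f(y)$: then $\mult_z(C')=2$, $\deg(C/C')=1$, $\mult_y(C)=1$, and your inequality reads $2 \leq 1$. Moreover, even if it were true, it points the wrong way: to get $e\, C'.L'/\mult_y(C) \geq C'.L'/\mult_z(C')$ (with $e=\deg(C/C')$) you need $\mult_y(C) \leq e \cdot \mult_z(C')$, not its reverse. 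That inequality \emph{is} true (it follows from $f^*\m_z \subset \m_y$ and the additivity formula for Hilbert--Samuel multiplicities of the finite extension $\calo_{C',z} \subset \calo_{C,y}$, with the local degree at $y$ bounded by $e$), but it requires a genuine argument that your proposal does not supply, and your heuristic for it is incorrect. There is also a minor point at the end: when $L'$ is only nef one cannot divide by $C'.L'$, which is why the paper first perturbs $L,L'$ to ample divisors and passes to the limit.

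The paper sidesteps the multiplicity comparison for curves entirely by going in the opposite direction: for rational $0<t<\ep(Z,L';z)$ and $L'$ ample it produces, via \cite[Lemma 5.4.24]{La2}, a divisor $D' \in |kL' \otimes \m_z^{kt}|$ with $C' \not\subset \Supp D'$, pulls it back, and uses that $\mult_y(f^*D') \geq \mult_z(D') \geq kt$ — which is immediate from $f^*\m_z \subset \m_y$ — together with $C.(L-f^*L') \geq 0$ (exactly your use of $y \notin \mathbf{B}(L-f^*L')$) to get $k\,C.L \geq C.f^*D' \geq kt\cdot\mult_y(C)$. Pulling back highly singular divisors is strictly easier than pushing forward singular curves, and it explains the paper's initial reduction to the ample case. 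Your skeleton is salvageable, but as written the conceptual heart of your Case 2 is a false statement; you would need to replace it with the correct inequality $\mult_y(C) \leq \deg(C/f(C))\cdot\mult_{f(y)}(f(C))$ and prove it.
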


\begin{proof}

We may assume $L$ and $L'$ are ample.
In fact,
for nef $L,L'$,
choose ample divisors $A,A'$ on $Y,Z$ such that $y \not \in \mathbf{B}(A - f^*A')$,
and consider $L+\delta  A,L'+ \delta  A'$ with $ \delta  >0$ instead of $L,L'$.
Then we can show this lemma from ample cases by $\delta \rightarrow 0$.

The second inequality is clear by the definition of Seshadri constants,
thus it is enough to show the first one.
For the sake of simplicity, we set $z=f(y)$.
 
Fix a curve $C \subset Y$ containing $y$.
By Fact \ref{well_known_facts} (3),
it suffices to show
\begin{align}
\tag{$*$}
\min \{ \min_i \ep(Y_i,L|_{Y_i};y), \ep(Z,L';z) \} \leq \dfrac{C.L}{\mult_y(C)}.
\end{align}

\begin{itemize}
\item[Case 1.] $C \subset f^{-1}(z)$.\\
Since $C \subset Y_i$ for some $i$,
$\dfrac{C.L}{\mult_y(C)} = \dfrac{C.L|_{Y_i}}{\mult_y(C)} \geq \ep(Y_i,L|_{Y_i};y)$ holds.\\
\item[Case 2.] $C \not \subset f^{-1}(z)$.\\
Set $C'=f(C)$ with the reduced structure,
and fix a rational number $0 < t < \varepsilon(Z,L';z)$.
Then for any sufficiently large and divisible $k \in \N$,
there exists $D' \in |kL' \otimes \mathfrak{m}_{z}^{kt}|$ such that $C' \not\subset \Supp D'$
by the ampleness of $L'$ and \cite[Lemma 5.4.24]{La2}.
Since $f^*D' \in |kf^*L' \otimes \mathfrak{m}_{y}^{kt}|$ and $C \not\subset \Supp f^*D'$,
we have
\begin{eqnarray*}
k(C.L) &=& kC.(L-f^*L' + f^*L') \\
       &=& kC.(L-f^*L') + C.f^*D' \\
      &\geq& C.f^*D' \\
      &\geq& kt \cdot \mult_y(C).
\end{eqnarray*}
Note $C.(L-f^*L') \geq 0 $ holds by the assumption $y \not\in \mathbf{B}(L-f^*L') $.
Therefore $\dfrac{C.L}{\mult_y(C)} \geq t$ holds
and we have $\dfrac{C.L}{\mult_y(C)} \geq \ep(Z,L';z)$
by $t \rightarrow \ep(Z,L';z)$.
\end{itemize}

Thus for any curve $C \subset Y$ containing $y$,
$(*)$ holds.
\end{proof}

By combining Lemmas \ref{orbit}, \ref{construction of fibration}, and \ref{estimate by fibration},
we obtain the following theorem,
which will be used to estimate $\epp$.
Theorem \ref{intro thm} is nothing but the last statement in this theorem.
The lower bound in this theorem is essentially a generalization of an Eckl's result \cite[Theorem 2.2]{Ec},
which is the case $n=2,r=1$.
In contrast with our geometric approach, Eckl's proof is more algebraic.

\begin{thm}\label{estimation by projection}
Let $\pi : \mr \rarw \mr' $ be a lattice projection for free abelian groups $M$ and $M'$ of rank $n$ and $r$.
Let $P \subset \mr$ be an $n$-dimensional integral polytope,
and fix $u' \in \pi(P) \cap M'_{\Q}$
such that $\dim P(u')=n-r$.
We have
\begin{eqnarray*}
\lefteqn{\min \{ \ep(X_{\pi(P)},L_{\pi(P)};1_{\pi(P)}), \ep(X_{P(u')},L_{P(u')};1_{P(u')}) \}  }\hspace{4cm} \\ 
 &\leq & \ep(X_P,L_P;1_P) \ \leq \ \ep(X_{\pi(P)},L_{\pi(P)};1_{\pi(P)}).
\end{eqnarray*}

In particular, we obtain
$$\min \{ \, |\pi(P)|, \ep(X_{P(u')},L_{P(u')};1_{P(u')}) \} 
 \leq  \ep(X_P,L_P;1_P) \ \leq \ |\pi(P)| $$
when $r=1$.
\end{thm}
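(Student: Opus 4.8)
The plan is to prove the two inequalities of the general statement separately and then deduce the ``in particular'' display for $r=1$ from it. The three Lemmas \ref{orbit}, \ref{construction of fibration}, and \ref{estimate by fibration} are tailored to this: the upper bound will come from realizing $X_{\pi(P)}$ as a subvariety of $X_P$, while the lower bound will come from feeding the fibration $\varphi : X_P \dashrightarrow X_{P(u')}$ into the fibration estimate.

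For the upper bound $\ep(X_P,L_P;1_P)\le \ep(X_{\pi(P)},L_{\pi(P)};1_{\pi(P)})$, I would invoke Lemma \ref{orbit} to identify $(X_{\pi(P)},L_{\pi(P)})$ with the closure $(\overline{T_{M'}},L_P|_{\overline{T_{M'}}})$ of $T_{M'}$ inside $X_P$. Under the inclusion $T_{M'}\hookrightarrow T_M$ the identity maps to the identity, so $1_{\pi(P)}$ corresponds to $1_P$, and the upper bound is then exactly Fact \ref{well_known_facts} (2) applied to the subvariety $\overline{T_{M'}}\subset X_P$.

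For the lower bound I would take the rational map $\varphi$ of Lemma \ref{construction of fibration}, choose an \emph{equivariant} resolution $\mu:Y\rarw X_P$ of its indeterminacy, and set $f=\varphi\circ\mu$. Since $\varphi$ is already a morphism on the open orbit $O_P$, the map $\mu$ is an isomorphism over $O_P\ni 1_P$; put $y=\mu^{-1}(1_P)$. Because $\mu$ is a local isomorphism near $y$, comparing curves through $y$ with their images through $1_P$ (via the projection formula and equality of multiplicities) gives $\ep(Y,\mu^*L_P;y)=\ep(X_P,L_P;1_P)$, so it suffices to bound the left side below. I would apply Lemma \ref{estimate by fibration} to $f:Y\rarw X_{P(u')}$ with $L=\mu^*L_P$ and $L'=L_{P(u')}$: these are nef, and $L-f^*L'$ is $\Q$-effective by Lemma \ref{construction of fibration} (i). Since $\varphi|_{T_M}$ is a homomorphism of tori, $f(y)=\varphi(1_P)=1_{P(u')}$, and the lemma yields
\[
\min\{\, \min_i \ep(Y_i,\mu^*L_P|_{Y_i};y),\ \ep(X_{P(u')},L_{P(u')};1_{P(u')})\,\}\ \le\ \ep(Y,\mu^*L_P;y),
\]
where the $Y_i$ are the components of $f^{-1}(1_{P(u')})$ through $y$.

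The remaining work, which I expect to be the main obstacle, is to verify the hypothesis $y\notin\mathbf{B}(L-f^*L')$ of Lemma \ref{estimate by fibration} and to identify the fiber term as $\ep(X_{\pi(P)},L_{\pi(P)};1_{\pi(P)})$. For the base-locus condition I would use equivariance: $L-f^*L'$ is $T_M$-linearized, so $\mathbf{B}(L-f^*L')$ is a $T_M$-invariant proper closed subset (proper since $L-f^*L'$ is $\Q$-effective), hence disjoint from the single dense orbit $\mu^{-1}(O_P)$, and so avoids $y$. For the fiber term, Lemma \ref{construction of fibration} (ii) gives $\mu(f^{-1}(1_{P(u')}))\cap O_P=T_{M'}$; since $1_{P(u')}$ lies in the open orbit of $X_{P(u')}$, every component of its fiber has the generic dimension $r=\dim\overline{T_{M'}}$, so the unique component $Y_1$ through $y$ is $\overline{\mu^{-1}(T_{M'})}$ and $\mu|_{Y_1}:Y_1\rarw \overline{T_{M'}}=X_{\pi(P)}$ is birational and an isomorphism near $y$. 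The same curve-comparison argument then gives $\ep(Y_1,\mu^*L_P|_{Y_1};y)=\ep(X_{\pi(P)},L_{\pi(P)};1_{\pi(P)})$, and as $\mu$ is a local isomorphism at $y$ there is only this one component through $y$. Substituting both identifications into the displayed inequality proves the general lower bound. Finally, for $r=1$ the target $X_{\pi(P)}$ is a one-dimensional polarized toric variety, whose only irreducible curve through the smooth point $1_{\pi(P)}$ is $X_{\pi(P)}$ itself, so $\ep(X_{\pi(P)},L_{\pi(P)};1_{\pi(P)})=\deg L_{\pi(P)}=|\pi(P)|$, turning the general inequalities into the stated ones.
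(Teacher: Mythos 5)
Your proposal is correct and follows essentially the same route as the paper: the lower bound comes from Lemma \ref{construction of fibration} and Lemma \ref{estimate by fibration} applied to a toric resolution (with the base-locus hypothesis checked via torus-invariance, and the fiber component through $1_Y$ identified with $\overline{T_{M'}}$), and the upper bound from restriction to the subvariety $\overline{T_{M'}}\subset X_P$, which is exactly the content of the second inequality of Lemma \ref{estimate by fibration}. The one small imprecision is writing $\overline{T_{M'}}=X_{\pi(P)}$: Lemma \ref{orbit} only identifies $(X_{\pi(P)},L_{\pi(P)})$ with the \emph{normalization} of $(\overline{T_{M'}},L_P|_{\overline{T_{M'}}})$, but since $1_P$ lies in the open orbit where normalization is an isomorphism the Seshadri constants agree, which is precisely how the paper closes this step.
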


\begin{proof}
Let $\varphi  : X_P \dashrightarrow X_{P(u')}$ be the rational map defined in Lemma \ref{construction of fibration}.
For a toric resolution $\mu:Y \rightarrow X_P$ of the indeterminacy of $\varphi $,
the stable base locus $\mathbf{B}(\mu^*L_P - f^*L_{P(u')})$ is contained in $Y\setminus O$,
where $O$ is the maximal orbit of $Y$.
By applying Lemma \ref{estimate by fibration} to $f:Y \rightarrow X_{P(u')},\mu^*L_P,L_{P(u')}$,
and the identity $1_Y$ of the torus $ O \subset Y$,
we have
\begin{eqnarray*}
\lefteqn{\min \{ \ep(Y_1,(\mu^*L_P)|_{Y_1};1_Y), \ep(X_{P(u')},L_{P(u')};1_{P(u')} ) \} }\hspace{3cm} \\ 
 &\leq & \ep(Y,\mu^*L_P;1_Y) \ \leq \ \ep(Y_1,(\mu^*L_P)|_{Y_1};1_Y),
\end{eqnarray*}
where $Y_1$ is the irreducible component of $f^{-1}(1_{P(u')})$ containing $1_Y$.
Since $\mu:Y \rightarrow X_P$ and $f|_{Y_1}:Y_1 \rightarrow \overline{T_{M'}} (\subset X_P)$
are birational and isomorphic around $1_Y$ from the proof of Lemma \ref{construction of fibration},
it holds that
$\ep(Y,\mu^*L_P;1_Y)=\ep(X_P,L_P;1_P)$,
$\ep(Y_1,(\mu^*L_P)|_{Y_1};1_Y)=\ep(\overline{T_{M'}}, L_P|_{\overline{T_{M'}}};1_P)$.
The normalization of $(\overline{T_{M'}}, L_P|_{\overline{T_{M'}}})$ is $(X_{\pi(P)},L_{\pi(P)})$ by Lemma \ref{orbit}.
Since Seshadri constants at very general points are unchanged by normalization,
we have
$$\ep(Y_1,(\mu^*L_P)|_{Y_1};1_Y)=\ep(\overline{T_{M'}}, L_P|_{\overline{T_{M'}}};1_P)=\ep(X_{\pi(P)},L_{\pi(P)};1_{\pi(P)}),$$
and the theorem follows.

The last statement is clear since $\ep(X_{\pi(P)},L_{\pi(P)};1_{\pi(P)})=\deg L_{\pi(P)}=|\pi(P)|$ holds when $r=1$.
\end{proof}

\begin{rem}\label{rem for mainprop}
If $\ep(X_{\pi(P)},L_{\pi(P)};1_{\pi(P)}) \leq \ep(X_{P(u')},L_{P(u')};1_{P(u')})$
holds for some $\pi$ and $u'$ in Theorem \ref{estimation by projection},
we have $\epp= \ep(X_{\pi(P)},L_{\pi(P)};1_{\pi(P)})$.
As we will see in the following examples,
we can sometimes compute $\epp$ explicitly by finding such $\pi$ and $u'$.
\end{rem}

\begin{eg}\label{ex_P1 cross P1}
Set $P=\conv((0,0),(a,0),(0,b),(a,b)) \subset \R^2$ for $a \leq b \in \zp$.
We apply Theorem \ref{estimation by projection} to the projection $\pi: \R^2 \rarw \R$ onto the first factor and $u'=0 \in \R$.
Then $\ep(X_P,L_P;1_P)= \ep(X_{\pi(P)},L_{\pi(P)};1_{\pi(P)})=a$ by Remark \ref{rem for mainprop}
because $|\pi(P)| =a \leq b= \ep(X_{P(0)},L_{P(0)};1_{P(0)})$.
Note that $(X_P,L_P)=(\proj^1 \times \proj^1,\calo(a,b))$.
\end{eg}

\begin{eg}\label{ex_toric_fano_3fold}
There are 18 smooth toric Fano 3-folds (cf.\ \cite{Ba}, \cite{WW}).
For an integral polytope $P \subset \R^3$ such that $X_P$ is a smooth Fano 3-fold and $L_P=-K_{X_P}$,
we can easily find a projection $\pi : \R^3 \rarw \R $ and $u' \in \Q$ as in Remark \ref{rem for mainprop}
and compute $\epp$.
As a consequence,
we have
\[
\ep(X_P,L_P;1_P)=
\left\{
\begin{array}{cl}
4 & \mbox{if $X_P=\proj^3 $}\\
3 & \mbox{if $X_P=\proj_{\proj^1}(\calo_{\proj^1} \oplus \calo_{\proj^1} \oplus \calo_{\proj^1}(1))$ }\\
2 & \mbox{otherwise.}
\end{array}
\right.
\]
\end{eg}

\begin{eg}\label{ex of toric at maximal orbit}
We can generalize the estimation in Example \ref{example1}.
For rational numbers $a_1,\ldots,a_n \geq 0$,
set $P=\conv(e_1,\ldots,e_n, -\sum_{i=1}^n a_i e_i) \subset \R^n$.
Then it holds that
\begin{align}
\tag{$\dagger$}
 \epp \geq \min_{1 \leq i \leq n} \dfrac{a_i + \cdots + a_n + 1}{a_{i+1} + \cdots + a_n +1}. 
\end{align}

We show $(\dagger)$ by induction of $n$.
When $n=1$,
$\epp=|P|=a_1+1 $.
Thus $(\dagger)$ holds.

Assume $(\dagger)$ holds for $n-1$.
We apply Theorem \ref{estimation by projection}
to the projection $\pi : \R^n \rarw \R$ onto the $n$-th factor, $P$, and $ 0 \in \pi(P) \cap \Q$.
Then $P(0)=\pi^{-1}(0) \cap P = \conv (e_1,\ldots,e_{n-1}, -1/(a_n+1) \sum_{i=1}^{n-1} a_i e_i)$
and $\pi(P)=[-a_n,1] \subset \R .$
By induction hypothesis,
\begin{eqnarray*}
\ep(X_{P(0)},L_{P(0)};1_{P(0)}) &\geq&
\min_{1 \leq i \leq n-1}  \dfrac{a_i/(a_n+1) + \cdots + a_{n-1}/(a_n+1) + 1}{a_{i+1}/(a_n+1) + \cdots + a_{n-1}/(a_n+1) +1}\\
&=& \min_{1 \leq i \leq n-1} \dfrac{a_i + \cdots + a_n + 1}{a_{i+1} + \cdots + a_n +1} 
\end{eqnarray*}
holds.
By Theorem \ref{estimation by projection},
it follows that
\begin{eqnarray*}
\epp &\geq& \min \{ \ep(X_{\pi(P)},L_{\pi(P)};1_{\pi(P)}) , \ep(X_{P(0)},L_{P(0)};1_{P(0)}) \} \\
 &\geq & \min \Bigl\{ a_n+1, \min_{1 \leq i \leq n-1} \dfrac{a_i + \cdots + a_n + 1}{a_{i+1} + \cdots + a_n +1}  \Bigr\} \\
 &=&  \min_{1 \leq i \leq n} \dfrac{a_i + \cdots + a_n + 1}{a_{i+1} + \cdots + a_n +1}.
\end{eqnarray*}
We will use this lower bound in Section \ref{non-toric}.
\end{eg}

\subsection{At a point in any orbit}
Next, we consider the Seshadri constant on a toric variety at a point
not contained in the maximal orbit.
For a polytope $\sigma \subset \mr$,
we denote by $\sigma-\sigma$ the Minkowski sum of $\sigma$ and  $-\sigma$.
The linear space spanned by $\sigma-\sigma$
is denoted by $\R(\sigma-\sigma)$.

\begin{defn}\label{def of s}
Let $P$ be an integral polytope of dimension $n$ in $\mr$,
and $v$ a vertex of $P$.
We define $s(P; v)$ to be the minimum length of an edge of $P$ containing the vertex $v$;
in symbols, we have
$$s(P;v)=\min \{ \, |\tau|_{M_{\tau}} \, | \, v \prec \tau \prec P, \dim \tau =1 \} \in \zp,$$
where $M_{\tau}=\R(\tau-\tau) \cap M$
and we consider $\tau$ as a subset in $(M_{\tau})_{\R}=\R(\tau - \tau)$ by a parallel translation.
If $M=\{0\}$, we set $s(P;v)= + \infty$ for $P=v=\{0\}$.
\end{defn}

Let $\sigma$ be a face of an $n$-dimensional integral polytope $P \subset \mr$.
Let $\pi:\mr \rarw \mr/\R(\sigma-\sigma)$ be the natural projection and 
set $M'=\pi(M), P'=\pi(P)$, and $v'=\pi(\sigma)$. 
Note that $P'$ is an integral polytope in $M'_{\R}=\mr/\R(\sigma-\sigma)$ and $v'$ is a vertex of $P'$.
The following proposition says that
we can reduce the study of $\ep(X_P,L_P;p)$ for $p \in O_{\sigma}$
to that of $\ep(X_{\sigma},L_{\sigma}; 1_{\sigma})$.
We can regard $\sigma$ as an integral polytope
in $(\R(\sigma - \sigma) \cap M)_{\R}=\R(\sigma - \sigma)$ by a parallel translation,
hence we can consider the polarized toric variety $(X_{\sigma},L_{\sigma})$.

\begin{prop}\label{sc at any point}
Let $\sigma$ be a face of an $n$-dimensional integral polytope $P \subset \mr$.
Set $\pi:\mr \rightarrow \mr/\R(\sigma-\sigma),P'=\pi(P)$, and $v'=\pi(\sigma)$
as above. 
Then,
$$\ep(X_P,L_P; p) = \min \{\ep(X_{\sigma},L_{\sigma}; 1_{\sigma}), s(P';v')\}$$
holds for any $p \in O_{\sigma}$.
\end{prop}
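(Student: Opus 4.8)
The plan is to prove the two inequalities separately, after first normalizing the point $p$. Since $\ep(X_P,L_P;p)$ is constant for $p\in O_\sigma$ by the torus action (Remark \ref{normalization}), I am free to choose $p$ conveniently. The combinatorial fact I would record first is that $\sigma=\pi^{-1}(v')\cap P$: indeed $\pi$ kills $\R(\sigma-\sigma)$, so $\pi(\sigma)=v'$ is a single point; the preimage $\pi^{-1}(v')\cap P$ is a face $\tau\supseteq\sigma$ of $P$; and $\R(\tau-\tau)\subseteq\ker\pi=\R(\sigma-\sigma)$ forces $\tau=\sigma$. Consequently the torus-fixed point $x_{v'}$ of $\overline{T_{M'}}\subset X_P$ attached to the vertex $v'$ of $P'$ lies in the orbit $O_\sigma$, since the one-parameter limit defining $x_{v'}$ is governed by a functional on $P$ minimized exactly along $\sigma$. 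I would therefore take $p=x_{v'}$, the common point of the two ``transverse'' subvarieties $X_\sigma$ and $\overline{T_{M'}}$.

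For the upper bound I would use these two subvarieties together with Fact \ref{well_known_facts}(2). The closed embedding $X_\sigma\hookrightarrow X_P$ of Definition \ref{def of toric} satisfies $L_P|_{X_\sigma}=L_\sigma$, so $\ep(X_P,L_P;p)\le\ep(X_\sigma,L_\sigma;p)=\ep(X_\sigma,L_\sigma;1_\sigma)$, the last equality by the torus action on $X_\sigma$. On the other hand, by Lemma \ref{orbit} the closure $\overline{T_{M'}}$ is a polarized toric variety with moment polytope $\pi(P)=P'$, whose normalization is $(X_{P'},L_{P'})$; applying Fact \ref{well_known_facts}(2) to $\overline{T_{M'}}\hookrightarrow X_P$ and the edge of $P'$ at $v'$ of minimal lattice length yields $\ep(X_P,L_P;p)\le s(P';v')$. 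This last quantity is exactly the Seshadri constant of $(X_{P'},L_{P'})$ at its torus-fixed point $x_{v'}$ (cf.\ \cite{Di}, \cite{BDH+}; it is also the special case $\sigma=\{v'\}$ of the proposition). Combining the two gives $\ep(X_P,L_P;p)\le\min\{\ep(X_\sigma,L_\sigma;1_\sigma),s(P';v')\}$.

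For the lower bound I would argue through curves via Fact \ref{well_known_facts}(3), exploiting the Zariski-local product structure of the affine chart $U\subset X_P$ around $O_\sigma$. A splitting $M=(\R(\sigma-\sigma)\cap M)\oplus M'$ identifies $U\cong O_\sigma\times Z$, where $Z$ is the affine toric variety of the tangent cone of $P'$ at $v'$, with $p=(p_0,z_0)$, $z_0$ the fixed point of $Z$, and $X_\sigma\cap U=O_\sigma\times\{z_0\}$. Fix a reduced irreducible curve $C$ through $p$. If $C\subset X_\sigma$, then $C.L_P=C.L_\sigma$ and $C.L_P/\mult_p C\ge\ep(X_\sigma,L_\sigma;1_\sigma)$. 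If $C\not\subset X_\sigma$, its image under the projection $U\rarw Z$ is a nonconstant curve through $z_0$. For rational $t<s(P';v')$ and sufficiently divisible $k$, the monomials $x^u$ with $u\in kP\cap M$ whose projection $\pi(u)$ lies deep enough in the tangent cone of $P'$ at $v'$ all have multiplicity $\ge kt$ at $p$ (the extra $O_\sigma$-factor, being a torus, contributes a unit); since $t<s(P';v')$, the fixed-point computation on $(X_{P'},L_{P'})$ guarantees, via \cite[Lemma 5.4.24]{La2}, that such monomials do not all vanish on the image of $C$ in $Z$. Hence some $D\in|kL_P|$ has $\mult_p D\ge kt$ and $C\not\subset\Supp D$, so $k(C.L_P)=C.D\ge\mult_p C\cdot\mult_p D\ge kt\cdot\mult_p C$; letting $t\rarw s(P';v')$ gives $C.L_P/\mult_p C\ge s(P';v')$. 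Taking the infimum over all $C$ yields $\ep(X_P,L_P;p)\ge\min\{\ep(X_\sigma,L_\sigma;1_\sigma),s(P';v')\}$.

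The main obstacle I expect is precisely this transverse case of the lower bound. Because $p$ is not a torus-fixed point of $X_P$ unless $\sigma$ is a vertex, I cannot invoke a toric blow-up computation directly, and Lemma \ref{estimate by fibration} does not apply cleanly, since the natural rational map $X_P\dashrightarrow X_\sigma$ is typically undefined at $p\in O_\sigma$. The product chart $U\cong O_\sigma\times Z$ is what rescues the argument: it lets me build global sections of $kL_P$ with prescribed multiplicity at $p$ out of monomials whose transverse parts are deep in the cone at $v'$, and it ensures that a curve leaving $X_\sigma$ genuinely moves in the $Z$-direction, so that such a section can be chosen to avoid it. Verifying that the product decomposition of $U$ is compatible with the monomial sections of $L_P$—so that multiplicity at $p$ is read off from the transverse factor $Z$ and the base locus really misses $C$—is the technical heart, and is where I would spend the most care.
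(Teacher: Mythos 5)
Your reduction to the distinguished point $x_{v'}\in O_{\sigma}$ (via the observation $\sigma=\pi^{-1}(v')\cap P$) and your upper bound are fine. The inequality $\ep(X_P,L_P;p)\le s(P';v')$ obtained by restricting to $\overline{T_{M'}}$ and using the edge curves of $P'$ at $v'$ is a legitimate alternative to the paper's route, which instead applies Theorem \ref{estimation by projection} to each face $\tau$ with $\sigma\prec\tau\prec P$, $\dim\tau=\dim\sigma+1$, and then invokes lower semicontinuity of the Seshadri constant at the smooth point $p\in X_{\tau}$. Only the trivial direction of the fixed-point computation is needed for this, so no circularity arises there.

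The lower bound, however, has a genuine gap at exactly the step you identify as the technical heart. To conclude that the monomials of multiplicity $\ge kt$ at $p$ do not all vanish on the image of $C$ in $Z$, you invoke ``the fixed-point computation on $(X_{P'},L_{P'})$'' together with \cite[Lemma 5.4.24]{La2}. But that computation is the inequality $\ep(X_{P'},L_{P'};x_{v'})\ge s(P';v')$ at a torus-fixed point of a possibly \emph{singular} polarized toric variety; it is not available from \cite{Di} or \cite{BDH+}, which cover only the smooth case (cf.\ Remark \ref{at torus inv points}), and it is precisely the case $\sigma=\{v'\}$ of the proposition you are proving. That case is not lower-dimensional (the projection by $\R(\sigma-\sigma)=0$ is the identity), so no induction closes the loop. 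Even granting it, Lemma 5.4.24 produces a divisor on $X_{P'}$ that need not be monomial, and transporting it to a global section of $kL_P$ of the required multiplicity would need $\pi(kP\cap M)=kP'\cap M'$, which fails in general. What is actually required, and what the paper supplies in Claim \ref{claim2}, is an explicit combinatorial substitute: for each face $\tau$ with $\sigma\prec\tau\prec P$ and $\dim\tau=\dim\sigma+1$, choose a vertex $v_{\tau}$ of $\tau$ with $\pi(v_{\tau})\neq v'$; the monomial divisor $D_{v_{\tau}}\in|L_P|$ satisfies $\mult_p D_{v_{\tau}}\ge|\pi(\tau)|\ge s(P';v')$, because $x^{v_{\tau}}=x^{u}\cdot(x^{e})^{|\pi(\tau)|}$ with $x^{u}$ a unit near $p$, and $\bigcap_{\tau}\Supp D_{v_{\tau}}$ coincides with $X_{\sigma}$ in a neighbourhood of $p$, so any curve $C\not\subset X_{\sigma}$ through $p$ misses some $D_{v_{\tau_0}}$ and $C.L_P\ge\mult_p(C)\,|\pi(\tau_0)|$. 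Your product chart $U\cong O_{\sigma}\times Z$ is the right local picture, but without this explicit choice of sections the transverse case is not established.
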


\begin{proof}
We may assume $0 \in \sigma$,
thus $v'=0$ in $M'=\pi(M)$.

Firstly, we show $\ep(X_P,L_P;p) \leq \min \{\ep(X_{\sigma},L_{\sigma};p), s(P';v')\}.$
The torus action implies that $\ep(X_{\sigma},L_{\sigma};p)=\ep(X_{\sigma},L_{\sigma}; 1_{\sigma}) $.
Since $L_P|_{X_{\sigma}} = L_{\sigma}$,
the inequality
\begin{align}
\tag{$\flat$}
\ep(X_P,L_P;p) \leq \ep(X_{\sigma},L_{\sigma};p)
\end{align}
is clear
from the natural inclusion $X_{\sigma} \hookrightarrow X_P$.
By the definition of $\pi$, there is a natural one-to-one correspondence between 
$\Xi:=\{\tau \, | \, \sigma \prec \tau \prec  P, \, \dim\tau=\dim\sigma+1\}$ 
and $\Xi' :=\{\tau'  \, | \, v' \prec \tau' \prec P', \, \dim\tau'=1\}$
by sending $\tau \in \Xi$ to $\pi(\tau) \in \Xi'$.
Fix $\tau' \in \Xi'$ and let $\tau \in \Xi$ be the corresponding face of $P$.
Then by Theorem \ref{estimation by projection}, 
$\ep(X_{\tau},L_{\tau};q) \leq |\tau'|$ holds for any $q \in O_{\tau}$.
Since $\codim (X_{\sigma},X_{\tau}) =1$ and $X_{\tau}$ is normal,
$X_{\tau}$ is smooth at $p$.
Therefore by the lower semicontinuity of Seshadri constants (see \cite[Example 5.1.11]{La2}),
it holds that $\ep(X_P,L_P;p) \leq \ep(X_{\tau},L_{\tau};p) \leq \ep(X_{\tau},L_{\tau};q) \leq |\tau'|$.
By the definition of $s(P';v')$, we have
\begin{align}
\tag{$\natural$}
\ep(X_P,L_P;p) \leq \min_{\tau' \in \Xi'} |\tau'| =s(P';v').
\end{align}
From $(\flat)$ and $(\natural)$,
it follows that $\ep(X_P,L_P;p) \leq \min \{\ep(X_{\sigma},L_{\sigma};p), s(P';v')\}$.

\vspace{3mm}
Next we show the opposite inequality.
Let $C$ be a curve on $X_P$ containing $p$.
It is enough to show
\begin{align}
\tag{$\sharp$}
C.L_P \geq \mult_p(C) \cdot \min \{\ep(X_{\sigma},L_{\sigma};p), s(P';v')\}.
\end{align}
\\
Case 1. $C \subset X_{\sigma}$.

In this case, $C.L_P \geq \mult_p(C) \cdot \ep(X_{\sigma},L_{\sigma};p)$ is clear by the definition of Seshadri constants,
thus $(\sharp)$ holds.
\\
\\
Case 2. $C \not \subset X_{\sigma}$.

We use the following claim.
\begin{clm}\label{claim2}
In this case,
there exist $\tau' \in \Xi'$ and an effective divisor $D \in |L_P \otimes \mathfrak{m}_{p}^{|\tau'|}|$ on $X_P$
such that $C \not \subset \Supp D $.
\end{clm}

\begin{proof}[{Proof of Claim \ref{claim2}}]
For $\tau' \in \Xi'$, let $v'_{\tau'}$ be the vertex of $\tau'$ different from $v'$.
If $\tau \in \Xi$ corresponds to $\tau'$, 
there exists a vertex $v_{\tau}$ of $\tau$ such that $\pi(v_{\tau})=v'_{\tau'}$.
Many vertices of $\tau$ may satisfy this condition,
but we choose one of them.
Let $x^{v_{\tau}} \in H^0(X_P,L_P)$ be the section corresponding to $v_{\tau}$,
and $D_{v_{\tau}} \in |L_P|$ the corresponding effective divisor on $X_P$.
Since $\Supp D_{v_{\tau}}= \bigcup_{v_{\tau} \notin \rho \prec P} X_{\rho}$, 
we have $$\bigcap_{\tau \in \Xi} \Supp D_{v_{\tau}} 
= \bigcup_{v_{\tau} \notin \rho \ \text{for} ^{\forall}\tau \in \Xi} X_{\rho}.$$
By the choices of $v_{\tau}$, $\sigma$ does not contain any $v_{\tau}$.
If $\rho \succ \sigma$ and $\rho \neq \sigma$,
then $\rho $ contains some $\tau \in \Xi $,
hence $v_{\tau} \in \tau \subset \rho$.
Consequently, it holds that 
$$X_{\sigma} \subset \bigcap_{\tau \in \Xi} \Supp D_{v_{\tau}} 
\subset X_{\sigma} \cup \bigcup_{\sigma \not\prec \rho \prec P} X_{\rho}.$$
Since $\bigcup_{\sigma \not\prec \rho \prec P} X_{\rho}$ is a closed set not containing $p$,
$\bigcap_{\tau \in \Xi} \Supp D_{v_{\tau}}$ coincides with $X_{\sigma}$ around $p$.
Now $C$ contains $p$ and is not contained in $X_{\sigma}$ by assumption, 
thus $C$ is not contained in $\bigcap_{\tau \in \Xi} \Supp D_{v_{\tau}}$.
Hence we can choose $\tau_0 \in \Xi$ such that $\Supp D_{v_{\tau_0}}$ does not contain $C$.
Let $\tau'_{0} \in \Xi'$ be the corresponding face,
and set $e'=|\tau'_0|^{-1} v'_{\tau'_0} \in M'$.
Since $v'=0$,
$\tau'_0$ is the convex hull of $v'$ and $v'_{\tau'_0}$.
Then $e'$ is the generator of $\R(\tau'_{0}-\tau'_{0}) \cap M'=\R \tau'_{0} \cap M' \cong \Z$ contained in $\tau'_{0}$.
Fix $e \in M \cap \pi^{-1}(e')$. 
Since $v'_{\tau'_{0}} = |\tau'_{0}|e'$,
$u :=v_{\tau_0} - |\tau'_{0}|e$ is contained in $\pi^{-1}(0) \cap M =\R(\sigma-\sigma) \cap M$.
This means $x^{v_{\tau_0}} = x^u \cdot (x^e)^{|\tau'_{0}|}$ is contained in $H^0(X_P,L_P \otimes \mathfrak{m}_{p}^{|\tau'|})$, 
hence this $\tau'_{0}$ and $D_{v_{\tau_0}}$ satisfy the condition in the claim.
\end{proof}

If there exists such a divisor $D$ in Claim \ref{claim2},
$(\sharp)$ holds since
\[ C.L_P = C.D \geq \mult_p(C) \cdot | \tau ' | \geq \mult_p(C) \cdot s(P';v') .\]
Thus the proof of the proposition is completed.
\end{proof}

\begin{rem}\label{at torus inv points}
For a vertex $v \prec P$,
we have $\ep(X_P,L_P;p) = s(P;v)$ for the torus invariant point $p=O_v$ by Proposition \ref{sc at any point}.
When $X_P$ is smooth,
this is proved by \cite[Corollary 4.2.2]{BDH+}.
\end{rem}

\begin{eg}
Let $P$ be as in Example \ref{example1}.
For any 1-dimensional face $\sigma$ of $P$,
$s(P',v')=|P'|=3$ and $\ep(X_{\sigma},L_{\sigma};1_{\sigma}) =1$.
(We use notations in Proposition \ref{sc at any point} here.)
Thus, we obtain $\ep(X_P,L_P;p)=\min \{1,3 \}=1$ for $p \in O_{\sigma}$.
For any vertex $v \prec P$ and $p=O_v$,
$\ep(X_P,L_P;p)=s(P;v)=1$ by Remark \ref{at torus inv points}.
Thus we have $\ep(X_P,L_P;p)=1$ for $p \in X_P \setminus O_P$.
\end{eg}

\begin{eg}
For an integral polytope $P \subset \R^3$ such that $X_P$ is a smooth Fano 3-fold and $L_P=-K_{X_P}$,
we can easily compute $\ep(X_P,L_P;p)$ for any $p$
by Theorem \ref{estimation by projection} and Proposition \ref{sc at any point}.
As a consequence,
we know $\ep(X_P,L_P;p) \in \{1,2,3,4\}$ for such $P$ and any $p \in X_P$.
\end{eg}

\section{Seshadri constants and toric degenerations}\label{non-toric}

In the previous section,
we study the Seshadri constants on toric varieties.
In this section, we investigate non-toric cases by using toric degenerations.

\begin{defn}\label{sc at very gene}
Let $L$ be a nef $\R$-divisor on a projective variety $X$,
and $\mb=(m_1,\ldots,m_r) \in \r+^r \setminus 0$ for $r >0$.
We say $L(\mb)$ or $L(m_1,\ldots,m_r)$ is \textit{nef} (resp.\ \textit{ample}) if so is
$$\mu^{*} L - \sum_{i=1}^{r} m_i E_i,$$
where $p_1,\ldots,p_r$ are very general $r$ points on $X$,
$\mu : \widetilde{X} \rarw X$ is the blowing up at $p_1,\ldots,p_r$,
and $E_i$ is the exceptional divisor over $p_i$.
In other words,
$L(\mb)$ is nef
if and only if $\ep(X,L;\mb) \geq 1$.
\end{defn}

\begin{rem}\label{suffice for one choice}
To show the nefness of $L(\mb)$,
it is enough to show $ \mu^{*} L - \sum_{i=1}^{r} m_i E_i$ is nef for \textit{one} choice of $p_1,\ldots,p_r$.
This follows from the openness of the ampleness condition as \cite[Lemma.6.1.A]{Bi}.
\end{rem}

By using degenerations, we can show the nefness (resp.\ ampleness) of a divisor
from the nefness (resp.\ ampleness) of other divisors.
The following lemma is a straightforward generalization of Theorem 2.A in \cite{Bi},
so we leave the proof to the reader.

\begin{lem}\label{degeneration}
Let $f:\mathcal{X} \rarw T$ be a flat projective morphism over a smooth variety $T$
with reduced and irreducible general fibers,
and $\mathcal{L}$ an $f$-nef (resp.\ $f$-ample) divisor on $\mathcal{X}$.
Let $X_t=f^{-1}(t)$ be the scheme theoretic fiber of $f$, 
$L_t =\mathcal{L}|_{X_t}$ for $t \in T$.
Assume that $Y_i$ for $ 1 \leq i \leq r $ are irreducible components of the central fiber $X_0$ where $ 0 \in T$ 
with the reduced structures (other components may exist).
If we assume the following:
\begin{itemize}
\item[(i)] $X_0$ is reduced at the generic point of $Y_i$ for any $i$, 
\item[(ii)] There exist $k_i \in \N$ and $\mb^{(i)} = (m^{(i)}_1,\ldots,m^{(i)}_{k_i}) \in \r+^{k_i} \setminus 0$ for
$1 \leq i \leq r $ such that
$\mathcal{L}|_{Y_i}(\mb^{(i)})$ is nef (resp.\ ample) for any $i$,
\end{itemize}
then $L_t(\mb^{(1)},\ldots,\mb^{(r)})$ is nef (resp.\ ample)
for very general $t \in T$. 
\end{lem}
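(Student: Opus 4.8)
The plan is to prove Lemma~\ref{degeneration} by reducing it to the known statement in \cite{Bi}, which is precisely the case $r=1$ with a single weight (i.e.\ the case where one blows up a single family of points on the total space and degenerates). The idea is that the multi-component, multi-weight statement here is obtained by applying the single-component result componentwise and then combining the resulting open conditions. First I would set up the blow-up on the total space: choosing, for each $i$, a collection of $k_i$ sections of $f$ (or multisections, after a generically finite base change of $T$) that specialize to $k_i$ very general points on $Y_i$ in the central fiber $X_0$ and to very general points on the general fiber $X_t$. Here hypothesis (i), that $X_0$ is reduced at the generic point of each $Y_i$, is what guarantees that such points specialize correctly and that the exceptional divisors on the blow-up of $\calx$ restrict to the expected exceptional divisors on the central fiber, so that the inequality $\call|_{Y_i}(\mb^{(i)})$ being nef genuinely controls the central fiber along $Y_i$.

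Next I would form the blow-up $\nu:\widetilde{\calx}\rarw\calx$ of $\calx$ along all these sections, with exceptional divisors $\mathcal{E}^{(i)}_j$, and consider the divisor $\nu^*\call - \sum_i\sum_j m^{(i)}_j \mathcal{E}^{(i)}_j$. The point is that its restriction to the central fiber $\widetilde{X}_0$ contains, among its components, the blow-ups $\widetilde{Y_i}$ of the reduced components $Y_i$ at the chosen very general points, and on each such component the restricted divisor is exactly $\mu^*(\call|_{Y_i}) - \sum_j m^{(i)}_j E^{(i)}_j$, which is nef (resp.\ ample) by hypothesis (ii). Since this divisor is $f$-nef (resp.\ $f$-ample) after the blow-up on at least the relevant components of the central fiber, the core of the argument is an openness/semicontinuity statement: nefness (resp.\ ampleness) of the restriction to a fiber is preserved on a nonempty (resp.\ open) locus of $t$ as one moves away from the central fiber in a flat family. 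This is exactly the mechanism of \cite[Theorem~2.A]{Bi}, and because the components $Y_1,\ldots,Y_r$ are handled by independent choices of sections, I can apply the single-family result to each $i$ and intersect the finitely many resulting loci in $T$ to get a common very general $t$.

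The main obstacle, and the step I would expect to require the most care, is the bookkeeping between the central fiber and its reduced components: the central fiber $X_0$ may be non-reduced and may have extra components beyond $Y_1,\ldots,Y_r$, so I must argue that nefness (resp.\ ampleness) of $\nu^*\call - \sum m^{(i)}_j\mathcal{E}^{(i)}_j$ on the \emph{chosen} components is enough to propagate positivity to the general fiber $X_t$, whose blow-up is irreducible. Concretely, the subtlety is that a curve on the general blown-up fiber could degenerate into a curve lying on one of the unwanted components or along the intersection locus of components; controlling the limiting intersection numbers so that the positivity survives is precisely what \cite{Bi} handles via the properness of the relevant Hilbert/Chow spaces and the semicontinuity of intersection numbers in flat families. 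Because the lemma is stated as ``a straightforward generalization'' of \cite[Theorem~2.A]{Bi}, I would organize the write-up so that all the genuinely new content is the reduction to independent single-family applications and the assertion that hypotheses (i) and (ii) feed correctly into Biran's setup, citing \cite{Bi} for the semicontinuity engine itself rather than reproving it.
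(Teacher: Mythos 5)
The paper offers no written proof of this lemma---it is declared a straightforward generalization of \cite[Theorem 2.A]{Bi} and left to the reader---so your basic skeleton (sections of $f$ through very general points of the $Y_i$, blow up $\calx$ along all of them, restrict to the central fibre, invoke openness of ampleness in the flat family together with Remark \ref{suffice for one choice}) is exactly the intended route. Two steps in your write-up, however, do not work as stated. First, the reduction ``apply the single-family result to each $i$ and intersect the finitely many resulting loci in $T$'' is not a valid logical move: applying the one-component statement to $Y_i$ alone would at best yield nefness of $L_t(\mb^{(i)})$ for each $i$ separately, which is strictly weaker than nefness of $L_t(\mb^{(1)},\ldots,\mb^{(r)})$ (already on the blow-up of $\proj^2$ at two points, $H-E_1$ and $H-E_2$ are nef while $H-E_1-E_2$ is not). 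The blow-ups along all $k_1+\cdots+k_r$ sections must be performed simultaneously and the openness/limiting argument applied once to the resulting family $\widetilde{\calx}\rarw T$; your first construction does this correctly, so the componentwise reduction should simply be deleted rather than leaned on.

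Second, the ``main obstacle'' you isolate---components of $X_0$ other than $Y_1,\ldots,Y_r$, about which nothing is assumed---cannot be discharged by deferring to \cite{Bi}, since in Biran's Theorem 2.A every component of the central fibre is controlled; the uncontrolled components are precisely the new feature being generalized, so the citation is circular at exactly the point where new content is needed. The resolution is elementary and should be stated: the chosen sections pass through very general points of the $Y_i$ and hence miss every other component $Z$ of $X_0$, so the restriction of $\nu^*\call-\sum_{i,j}m^{(i)}_j\mathcal{E}^{(i)}_j$ to such a $Z$ is just $\call|_Z$, which is nef (resp.\ ample) because $\call$ is $f$-nef (resp.\ $f$-ample). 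Since a line bundle on a proper scheme is ample if and only if its restriction to each irreducible component of the reduction is ample, the restricted divisor is then ample on the entire central fibre of $\widetilde{\calx}$, and openness of ampleness gives the ample case for $t$ near $0$; the nef case follows by replacing $\call$ with $\call+\epsilon A$ for an $f$-ample $A$ and letting $\epsilon\to 0$ over a countable intersection of open sets, which is where ``very general $t$'' genuinely enters. Your reading of hypothesis (i)---that reducedness at the generic points of the $Y_i$ is what makes $\mathcal{E}^{(i)}_j$ restrict to the exceptional divisor of $\widetilde{Y_i}$ with coefficient one, so that the weights $m^{(i)}_j$ are not rescaled on the central fibre---is correct and worth keeping.
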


\vspace{1mm}
Lemma \ref{degeneration}
tells us a strategy to obtain lower bounds for (multi-point) Seshadri constants at very general points:
\textit{Finding degenerations to (unions of) polarized varieties whose Seshadri constants are more computable,
such as toric varieties.}
For example,
if the central fiber $X_0$ is reduced and irreducible,
and the normalization of $(X_0,\mathcal{L}|_{X_0})$ is isomorphic to
$(X_P,L_P)$ for an integral polytope $P \subset \mr$,
then we have $\ep(X_t,L_t;1) \geq \ep(X_0,\mathcal{L}|_{X_0};1)= \ep(X_P,L_P;1)$ for very general $t$.

In the rest of this paper,
we give some explicit estimations or computations of Seshadri constants by using this strategy.

\subsection{Hypersurfaces and complete intersections in projective spaces}

In this subsection,
we study Seshadri constants on hypersurfaces or complete intersections in projective spaces.
For positive integers $d_1,\ldots,d_k$ and $n$,
we denote by $X^n_{d_1,\ldots,d_k}$ a very general complete intersection
of hypersurfaces of degrees $d_1,\ldots,d_k$ in $\proj^{n+k}$.

Firstly,
we estimate $\ep(X,\calo(1);1)$ for a very general complete intersection $X$.

\begin{prop}\label{a point in c.i.}
Let $d_1,\ldots,d_k$ and $n$ be positive integers.
Suppose that there exist a positive integer $c$ and natural numbers $l_1,\ldots,l_k$ such that
$\sum_{j=1}^k l_j=n$ and $d_j \geq c^{\, l_j} $ hold for any $1 \leq  j \leq k$.
We have $\ep(X^n_{d_1,\ldots,d_k},\calo(1);1) \geq c$.

In particular,
$\ep(X^n_{d},\calo(1);1) \geq \lfloor \sqrt[n]{d} \rfloor$ holds for any $d \in \zp.$
\end{prop}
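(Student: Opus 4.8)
\emph{Strategy.} The plan is to degenerate a very general $X^n_{d_1,\dots,d_k}$ to an explicit, possibly non-normal, toric complete intersection and to read off the bound $\ep\ge c$ from the moment polytope of its normalization, combining Lemma~\ref{degeneration} with the toric estimates of Section~3. I would start with the single-hypersurface case $k=1$, which already yields the ``in particular'' statement (that being the case $c=\lfloor\sqrt[n]{d}\rfloor$, so that $c^n\le d$). Here the hypothesis reads $d=d_1\ge c^{\,n}$, and I would choose nonnegative integers $a_1,\dots,a_n$ with $1+\sum_i a_i=d$ by a ``staircase'' recipe: set $b_{n+1}=1$, $b_i=c^{\,n+1-i}$ for $2\le i\le n$, $b_1=d$, and $a_i=b_i-b_{i+1}$; since $d\ge c^{\,n}$ these are nonnegative and every ratio $b_i/b_{i+1}$ is $\ge c$. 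Then $T_0^{\,d}-T_{v_0}\prod_i T_{e_i}^{a_i}$ is a degree-$d$ binomial whose zero locus is the toric variety on the lattice points $0,e_1,\dots,e_n,v_0:=-\sum_i a_i e_i$; by Remark~\ref{normalization} its normalization is $(X_P,L_P)$ with $P=\conv(e_1,\dots,e_n,v_0)$, and Example~\ref{ex of toric at maximal orbit} gives $\ep(X_P,L_P;1)\ge\min_i b_i/b_{i+1}\ge c$.

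For general $k$ I would run the same recipe in each of $k$ blocks. Writing $\R^n=\R^{l_1}\times\cdots\times\R^{l_k}$ and using coordinates $T_0$ together with $T^{(j)}_{e_1},\dots,T^{(j)}_{e_{l_j}},T^{(j)}_{v_0}$ for each $j$ (these number $1+\sum_j(l_j+1)=n+k+1$, exactly the coordinates of $\proj^{n+k}$), I place the block-$j$ points in the $j$-th factor and set $B_j:=T_0^{\,d_j}-T^{(j)}_{v_0}\prod_i(T^{(j)}_{e_i})^{a^{(j)}_i}$ with the per-block staircase $a^{(j)}_i$ coming from $d_j\ge c^{\,l_j}$. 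The central fibre $X_0:=\bigcap_j\Zeros(B_j)$ is the toric variety whose polytope $P$ is the free join (direct sum) of the block simplices $P_j=\conv(e_1,\dots,e_{l_j},v^{(j)}_0)$. To bound $\ep(X_P,L_P;1)$ I would iterate Theorem~\ref{estimation by projection}: projecting $\R^n\to\R^{l_1}$ sends $P$ onto $P_1$ and has fibre over $0$ equal to the free join of $P_2,\dots,P_k$, so induction on $k$ together with Example~\ref{ex of toric at maximal orbit} for each block yields $\ep(X_P,L_P;1)\ge c$.

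To produce the degeneration I would view $X_0$ as a point of the parameter space $\prod_j\proj\big(H^0(\proj^{n+k},\calo(d_j))\big)$ of complete intersections, whose general point is a very general $X^n_{d_1,\dots,d_k}$. Since the universal complete-intersection family is flat over the locus where the $k$ forms meet in codimension $k$, and $X_0$ lies in this locus once $B_1,\dots,B_k$ form a regular sequence, lower semicontinuity of Seshadri constants (equivalently Lemma~\ref{degeneration}) gives $\ep(X^n_{d_1,\dots,d_k},\calo(1);1)\ge\ep(X_0,\calo(1);1)$. Using the invariance of the Seshadri constant at a very general point under normalization (as in the proof of Theorem~\ref{estimation by projection}), the right-hand side equals $\ep(X_P,L_P;1)\ge c$. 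Forms with $l_j=0$ (possible when $k>n$) carry no dimension; I would incorporate such a form by absorbing its degree into a neighbouring block with $l_{j'}\ge 1$, enlarging that block's polytope by the factor $d_j$, which keeps the relevant staircase ratios at least $c$ and replaces one block-binomial by a small complete intersection of the corresponding degrees.

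The step I expect to be the main obstacle is the flatness together with the geometry of the central fibre: one must check that the chosen binomials cut out $X_0$ as a \emph{reduced, irreducible} scheme of pure dimension $n$ (so that the family is flat with reduced irreducible toric central fibre, as Lemma~\ref{degeneration} requires), rather than acquiring extra components or embedded, non-reduced structure; equivalently that $B_1,\dots,B_k$ form a regular sequence whose zero locus is exactly the toric variety of $P$. This is routine in the separated-block situation but needs genuine care in the absorbed case, where a block is presented as a complete intersection of several binomials and one must verify both irreducibility and that its polytope has the asserted normalized volume and Seshadri constant $\ge c$.
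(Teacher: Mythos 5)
Your overall strategy is the paper's: degenerate $X^n_{d_1,\ldots,d_k}$ to a toric complete intersection cut out by binomials, identify the moment polytope of (the normalization of) the central fibre, bound its Seshadri constant by Example~\ref{ex of toric at maximal orbit} and Theorem~\ref{estimation by projection}, and conclude by Lemma~\ref{degeneration}. The combinatorial implementation differs. The paper uses a \emph{chain} of binomials, $T_0^{d_1}-T_1^{d^{(1)}_1}\cdots T_n^{d^{(n)}_1}T_{n+1}$, $T_{n+1}^{d_2}-(\cdots)T_{n+2}$, \dots, so that each equation introduces one new variable and refers to the previous one; the resulting polytope is a \emph{single} simplex $\conv(e_1,\ldots,e_n,-\sum_i a^{(i)}e_i)$ with $a^{(i)}=\sum_j d^{(i)}_j d_{j+1}\cdots d_k$, to which Example~\ref{ex of toric at maximal orbit} applies in one shot. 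You instead use a \emph{star}: every binomial refers to $T_0$ and lives in its own block of variables, so the polytope is the free sum of $k$ block simplices and you need an extra induction on $k$ via Theorem~\ref{estimation by projection} (your identification of $\pi(P)=P_1$ and $P(0)=\conv(P_2\cup\cdots\cup P_k)$ is correct, as is the per-block staircase giving all ratios $b_i/b_{i+1}\ge c$). Both routes are valid and of comparable length; yours buys a cleaner separation of the blocks and avoids the paper's Step 3 reduction to $d_j=c^{l_j}$ in the $k=1$ case by choosing the staircase directly from $d\ge c^n$, while the paper's chain buys a single application of Example~\ref{ex of toric at maximal orbit} and a uniform treatment of all blocks. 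The flatness/reducedness issues you worry about are at the same level of rigor in both constructions (on the torus each binomial solves for one variable, so the intersection is the closure of an $n$-dimensional subtorus, irreducible and generically reduced of degree $\prod_j d_j$, matching the normalized volume of $P$); the paper does not belabor them either.

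The one place your proposal is genuinely incomplete is the case $l_j=0$ with $d_j\ge 2$, which the statement permits since $\N$ contains $0$ and $d_j\ge c^0=1$ is vacuous. There your block has a single degree-one coordinate $T^{(j)}_{v_0}$ and no homogeneous binomial $T_0^{d_j}-T^{(j)}_{v_0}$ of degree $d_j$ exists, so the star construction breaks down; your ``absorption into a neighbouring block'' is only sketched and, as you note, requires verifying irreducibility and the staircase ratios for a two-binomial block. The paper sidesteps this entirely: its Step 3 first degenerates $X^{n+k-1}_{d_j}$ to a union containing $X^{n+k-1}_{c^{l_j}}$, reducing to $d_j=c^{l_j}$, which for $l_j=0$ means $d_j=1$ and the corresponding equation is just a hyperplane (the chain binomial degenerates to the linear form $T_{n+j-1}-T_{n+j}$). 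You could repair your argument the same way --- apply that union degeneration to every block with $l_j=0$ and drop the resulting hyperplanes --- rather than absorbing degrees into a neighbouring block.
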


\begin{proof}
We prove this proposition
by 3 steps.
\vspace{2mm}
\ \\
\textbf{Step 1.}
Firstly,
we find a toric variety
which is a complete intersection
of hypersurfaces of degrees $d_1,\ldots,d_k$ in $\proj^{n+k}$.
Let $d^{(i)}_j $ be natural numbers for $1 \leq i \leq n, 1 \leq j \leq k$ such that
$1+\sum_{i=1}^n d^{(i)}_j=d_j$ holds for any $j$.
Let $X \subset \proj^{n+k}$ be the subvariety defined by the following homogeneous polynomials
\begin{eqnarray*}
T_0^{d_1} \! \! \! \! \! &-& \! \! \! \! T_1^{d^{(1)}_1} T_2^{d^{(2)}_1} \cdots T_{n}^{d^{(n)}_1} T_{n+1}\\
T_{n+1}^{d_2} \! \! \! \! \! &-&  \! \! \! \! T_1^{d^{(1)}_2} T_2^{d^{(2)}_2} \cdots T_{n}^{d^{(n)}_2} T_{n+2}\\
 & & \ \ \ \ \ \ \ \ \  \vdots\\
T_{n+k-1}^{d_k} \! \! \! \! \! &-& \! \! \! \! T_1^{d^{(1)}_k} T_2^{d^{(2)}_k} \cdots T_{n}^{d^{(n)}_k} T_{n+k},
\end{eqnarray*}
where $T_0, \ldots,T_{n+k}$ are the homogeneous coordinates on $\proj^{n+k}$.

Let $P$ be the image of the standard simplex $ \conv (0, e_1, \ldots, e_{n+k}) \subset \R^{n+k}$
by the lattice projection
$$ \pi: \R^{n+k}=(\Z^{n+k})_{\R} \rarw (\Z^{n+k} / \Lambda)_{\R}, $$
where $\Lambda$ is the subgroup of $\Z^{n+k}$ spanned by
$ \sum_{i=1}^n d^{(i)}_1 e_i + e_{n+1}$ and $ \sum_{i=1}^n d^{(i)}_j e_i  - d_j e_{n+j-1} +e_{n+j} $ for $2 \leq j \leq k$.
By construction,
the generators of $\Lambda$ correspond to the given system of homogeneous polynomials.
For example,
$ \sum_{i=1}^n d^{(i)}_1 e_i + e_{n+1}$ corresponds to the binomial
$T_0^{d_1} - \ T_1^{d^{(1)}_1} T_2^{d^{(2)}_1} \cdots T_{n}^{d^{(n)}_1} T_{n+1}$.
By Lemma \ref{orbit},
$(X,\calo(1))$ is a toric variety
whose normalization is $(X_P,L_P)$.
Since $X^n_{d_1,\ldots,d_k}$ degenerates to $X$,
we have $\ep(X^n_{d_1,\ldots,d_k},\calo(1);1) \geq \ep(X,\calo(1);1)= \ep(X_P,L_P;1) $ by Lemma \ref{degeneration}.
Thus it suffices to show $\ep(X_P,L_P;1) \geq c$
for a suitable choice of $d^{(i)}_j$.
\vspace{1mm}
\ \\
\textbf{Step 2.}
Secondly, we estimate $\ep(X_P,L_P;1) $ by $d^{(i)}_j $.
We denote $\pi(e_l)$ by $[e_l]$ for $1 \leq l \leq n+k$.
Since the coefficient of $e_{n+1}$ in $ \sum_{i=1}^n d^{(i)}_1 e_i + e_{n+1}$
and that of $e_{n+j}$
in $ \sum_{i=1}^n d^{(i)}_j e_i  - d_j e_{n+j-1} +e_{n+j} $ are $1$ for $2 \leq j \leq k$,
we can take $[e_1],\ldots,[e_n]$ as a basis of $\Z^{n+k} / \Lambda$.
Since $[e_{n+1}]=-\sum_{i=1}^n d^{(i)}_1 [e_i] $
and $[e_{n+j}]= -\sum_{i=1}^n d^{(i)}_j [e_i] + d_j [e_{n+j-1}]$,
we can show $ [e_{n+k}]=-\sum_{i=1}^n a^{(i)} e_i$ for
\begin{eqnarray*}
a^{(i)}&=&\sum_{j=1}^k  d^{(i)}_j d_{j+1} \cdots d_k\\
 &=&  d^{(i)}_1 d_2 \cdots d_k + \cdots +  d^{(i)}_{k-1} d_k+ d^{(i)}_k.
\end{eqnarray*}
It is easy to see that $P$ is the convex hull of $[e_1], \ldots, [e_{n}]$, and $[e_{n+k}]$.
By Example \ref{ex of toric at maximal orbit},
we have
$$\ep(X_P,L_P;1) \geq \min_{1 \leq i \leq n}  b^{(i)} / b^{(i+1)} , $$
where $b^{(i)}=a^{(i)}+a^{(i+1)}+\cdots + a^{(n)} +1$ for $1 \leq i \leq n$ and $b^{(n+1)}=1$.
\vspace{2mm}
\ \\
\textbf{Step 3.}
The complete intersection $ X^n_{d_1,\ldots,d_k} = \bigcap_j X^{n+k-1}_{d_j} $
degenerates to $\bigcap_j (X^{n+k-1}_{c^{\, l_j}} \cup X^{n+k-1}_{d_j-c^{\, l_j}}). $
Since $X^{n+k-1}_{c^{\, l_j}}$ and $ X^{n+k-1}_{d_j-c^{\, l_j}}$ are very general,
$ X^n_{c^{\, l_1},\ldots, c^{\, l_k}} = \bigcap_j X^{n+k-1}_{c^{\, l_j}}$
is an irreducible component of $\bigcap_j (X^{n+k-1}_{c^{\, l_j}} \cup X^{n+k-1}_{d_j-c^{\, l_j}}). $
By applying Lemma \ref{degeneration} to this degeneration,
we have $$ \ep(X^n_{d_1,\ldots,d_k},\calo(1);1 ) \geq \ep(X^n_{c^{\, l_1},\ldots, c^{\, l_k}} ,\calo(1);1 ).$$
Thus it is enough to show this proposition for $d_j=c^{\, l_j}$.

Let us define $d^{(i)}_j$ for $d_j=c^{\, l_j}$
such that $b^{(i)} /b^{(i+1)} = c$ for any $1 \leq i \leq n$.
Set
\[
d^{(i)}_j=
\left\{
\begin{array}{cc}
 (c-1)c^{\, h_j-i} &  \mbox{if $h_{j-1} < i \leq h_j $}  \ \\
 0 &  \mbox{otherwise} ,
\end{array}
\right .
\]
where $h_j=l_1 + \cdots + l_j$ for $1 \leq j \leq k$ and $h_{0}=0$.
In particular, we have $h_k= \sum_j l_j=n $ and $ \sum_i d^{(i)}_j = c^{\, l_j} -1= d_j -1$.
For each
$ h_{j-1} < i \leq h_j $,
we have
$$a^{(i)}=d_{j+1} \cdots d_k (c-1) c^{\, h_{j}-i}= (c-1)c^{\, n-i}.$$
Hence $b^{(i)}= c^{n+1-i} $ holds for any $i$
and we obtain $b^{(i)} /b^{(i+1)} = c^{n+1-i} / c^{n-i} = c$,
which proves this proposition.
\end{proof}

If we choose $d^{(i)}_j$ carefully,
we may obtain a better estimation than that of Proposition \ref{a point in c.i.}.
We use notations in the proof of Proposition \ref{a point in c.i.} in the following examples.

\begin{eg}\label{examples of c.i. Fano at a point}
Let $2 \leq d_1 \leq  \ldots \leq  d_k $ be positive integers
such that $\sum_j d_j \leq n+k $.
Then $X^n_{d_1,\ldots,d_k}$ is a Fano $n$-fold such that
$-K_{X^n_{d_1,\ldots,d_k}}=\calo(n+k+1 - \sum_j d_j)$. 
If $\sum_j d_j < n+k  $,
it is known that $X^n_{d_1,\ldots,d_k}$ is covered by lines (cf.\ \cite[Proposition 2.13]{Deb}).
Hence we have $\ep(X^n_{d_1,\ldots,d_k},\calo(1);1 )=1$.

\vspace{2mm}
If $\sum_j d_j=n+k$,
then $X^n_{d_1,\ldots,d_k}$ is a Fano $n$-fold such that
$-K_{X^n_{d_1,\ldots,d_k}}=\calo(1)$.
We can show $\ep(X^n_{d_1,\ldots,d_k},\calo(1);1)=d_k / (d_k-1) $ as follows.

\vspace{1mm}
We define $d^{(i)}_j$ by
\[
d^{(i)}_j=
\left\{
\begin{array}{cc}
 1 &  \mbox{if $h'_{j-1} < i \leq h'_j $}  \ \\
 0 &  \mbox{otherwise} ,
\end{array}
\right .
\]
where $h'_{j}=(d_{1}-1)+\cdots+(d_j-1) .$
Note $h'_{k}=\sum_{j=1}^k (d_j-1)=\sum_j d_j -k=n$.
Then we have $a^{(i)}=d_{j+1}\cdots d_k$ and $ b^{(i)}=d_{j+1}\cdots d_k (h'_{j}+2-i) $ for $ h'_{j-1} < i \leq h'_j .$
By Steps 1 and 2 in the proof of Proposition \ref{a point in c.i.},
we have
$$ \ep(X^n_{d_1,\ldots,d_k},\calo(1);1) \geq \min_{1 \leq i \leq n} \dfrac{b^{(i)}}{b^{(i+1)}} = \min_{1 \leq j \leq k} \dfrac{d_j}{d_j-1} = \frac{d_k}{d_k -1}.$$

\vspace{1mm}
Next,
we show $ \ep(X^n_{d_1,\ldots,d_k},\calo(1);1) \leq d_k / (d_k -1)$
by finding a curve $C \subset X^n_{d_1,\ldots,d_k}$ such that $C.\calo(1) / \mult_p(C) =d_k / (d_k -1)$
for any very general point $p \in X^n_{d_1,\ldots,d_k}$.

Let $F_1,\ldots,F_k$ be homogeneous polynomials in $\C[T_0,\ldots,T_{n+k}]$ of degrees $d_1,\ldots,d_k$ respectively
such that $X^n_{d_1,\ldots,d_k}=(F_1 =\cdots=F_k=0)$.
We may assume $p=[1:0:\cdots:0] \in \proj^{n+k}$.
Then there exist homogeneous polynomials $F_j^i \in \C[T_1,\ldots,T_{n+k}]$
such that $\deg F_j^i=i$ and $F_j=\sum_{i=1}^{d_j} T_0^{d_j-i} F_j^i $.
Let $D_j$ and $D_j^i \subset \proj^{n+k}$ be the hypersurfaces defined by $F_j$ and $F_j^i $ respectively.
Consider
$$\bigcap_{i=1}^{d_j} D_j^i = (F_j^1=\cdots=F_j^{d_j}=0) \subset D_j$$
for $1 \leq j \leq k-1$, and
$$ \bigcap_{i=1}^{d_k-2} D_k^i \cap (T_0 F_k^{d_k-1}+F_k^{d_k}=0)
= (F_k^1=\cdots=F_k^{d_k-2}=T_0 F_k^{d_k-1}+F_k^{d_k}=0) \subset D_k.$$
The $F_j^i$ are very general because $X^n_{d_1,\ldots,d_k}$ and $p$ are very general.
Hence
$$C:=\bigcap_{j=1}^{k-1} \bigcap_{i=1}^{d_j} D_j^i \cap \left(\bigcap_{i=1}^{d_k-2} D_k^i \cap (T_0 F_k^{d_k-1}+F_k^{d_k}=0)\right)$$
is a complete intersection curve in $\proj^{n+k}$,
and $p \in C \subset \cap_{j=1}^k D_j=X^n_{d_1,\ldots,d_k}$.
By definition,
$$\deg C= \prod_{j=1}^{k-1} d_j ! \cdot (d_k-2)! \cdot d_k,
\ \mult_p (C) = \prod_{j=1}^{k-1} d_j ! \cdot (d_k-1)! $$
hold.
Thus we have
$$\ep(X^n_{d_1,\ldots,d_k},\calo(1);1)=\ep(X^n_{d_1,\ldots,d_k},\calo(1);p) \leq \frac{\deg C}{\mult_p (C)} = \frac{d_k}{d_k-1}.$$
Therefore it follows that $\ep(X^n_{d_1,\ldots,d_k},\calo(1);1)=d_k / (d_k-1)$.

For example,
we have
$$\ep(X^3_4,\calo(1);1)  = 4/3, \
\ep(X^3_{2,3},\calo(1);1) = 3/2, \
\ep(X^3_{2,2,2},\calo(1);1) =  2$$
when $n=3$.
\end{eg}

\begin{eg}\label{examples of hypersurf at a point}
When $k=1$,
we write $d=d_1,d^{(i)}=d_1^{(i)}$ for simplicity.
Then,
$a^{(i)}=d^{(i)}$ for any $i$.
Thus we have
$$\ep(X_d^{n},\calo(1);1) \geq \min_{1 \leq i \leq n} \dfrac{d^{(i)}+\cdots + d^{(n)} + 1}{d^{(i+1)}+\cdots + d^{(n)} + 1 }.$$
In other words,
$$\ep(X_d^{n},\calo(1);1) \geq \min \left\{ \frac{c_n}1, \frac{c_{n-1}}{c_n},\ldots, \frac{c_2}{c_3}, \frac{c_1}{c_2} \right\} $$
holds for any increase sequence of positive integers $1 \leq c_n \leq c_{n-1} \leq \cdots \leq c_1=d.$
For instance,
$\ep(X_{22}^{3},\calo(1);1) \geq \min \{ 3/1, 8/3, 22/8 \} =8/3 $ follows from $1 \leq 3 \leq 8 \leq 22$.

When $n=2$,
set $c_1=d , c_2=\lceil \sqrt{d} \ \rceil $.
Then 
$\ep(X_d^2,\calo(1);1) \geq \min \{ \lceil \sqrt{d} \ \rceil, d / \lceil \sqrt{d} \ \rceil \}= d / \lceil \sqrt{d} \ \rceil$ holds.
From this and Proposition \ref{a point in c.i.}, we have
$$\ep(X_d^2,\calo(1);1) \geq \max \{ \lfloor \sqrt{d} \ \rfloor, d /  \lceil \sqrt{d} \ \rceil \} .$$
When $d \geq 4 $,
$\ep(X_d^2,\calo(1);1) \geq \lfloor \sqrt{d} \ \rfloor$ follows from Proposition 1 in \cite{St} as well
since $\Pic X = \Z \calo_X(1)$.
But $d /  \lceil \sqrt{d} \ \rceil $ is a new estimation.
For example, $\ep(X_7^2,\calo(1);1) \geq 7/3$ holds.
\end{eg}

\begin{rem}\label{simple_pf_for_hypersurf}
The final part of Proposition \ref{a point in c.i.},
i.e.,
$\ep(X^n_{d},\calo(1);1) \geq \lfloor \sqrt[n]{d} \rfloor$,
can be easily shown without using estimations in toric cases as follows.

By the first statement of Step 3 in the proof of Proposition \ref{a point in c.i.},
we may assume $d=c^{\,n}$ for $c \in \zp$.
We consider the embedding $i : \proj^n \hookrightarrow \proj^N$
defined by $|\calo_{\proj^n}(c)|$.
By a suitable linear projection $\pi : \proj^N  \dashrightarrow \proj^{n+1}$, 
$\pi \circ i : \proj^n \rarw \proj^{n+1}$ is a finite birational morphism onto the image $Y:=\pi \circ i (\proj^n)$.
Thus $\ep(Y,\calo(1);1)=\ep(\proj^n,\calo(c);1)=c$ holds since $(\pi \circ i)^* \calo_Y(1) =\calo_{\proj^n}(c)$.
By definition
$Y$ is a hypersurface of degree $d$ in $\proj^{n+1}$,
hence $X_d^n$ degenerates to $Y$.
Therefore we have $\ep(X_d^n,\calo(1);1) \geq \ep(Y,\calo(1);1)=c=\lfloor \sqrt[n]{d} \rfloor$
by the lower semicontinuity of Seshadri constants.

We also have the following observation:
Seshadri constants at very general points take maximal values for hypersurfaces in projective spaces.

For any polarized variety $(X,L)$ such that $L$ is very ample,
$$\ep(X,L;\mb) \leq \ep(X^n_d,\calo(1);\mb) $$
holds for any $\mb \in \r+^r \setminus 0$,
where $n=\dim X$ and $d=L^n$.
\end{rem}

\vspace{2mm}

Next,
we study multi-point cases.
The following lemma looks like Theorem 2.A in \cite{Bi}.

\begin{lem}\label{multi point in c.i.}
Let $d_1,\ldots,d_k , a, b$, and $n $ be positive integers for $k \in \N$.
We denote by $L_a,  L_b$, and $ L_{a+b}$
the invertible sheaf $\calo(1)$
on $X^n_{d_1,\ldots,d_k,a}, X^n_{d_1,\ldots,d_k,b}$, and $ X^n_{d_1,\ldots,d_k,a+b}$
respectively.
If $L_a(\mb_1) $ and $ L_b(\mb_2) $
are nef (resp.\ ample) for  $\mb_1 \in \r+^{r_1} \setminus 0$ and $\mb_2 \in \r+^{r_2} \setminus 0$,
then $L_{a+b}(\mb_1,\mb_2)  $ is also nef (resp.\ ample).
\end{lem}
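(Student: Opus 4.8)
The plan is to realize $X^n_{d_1,\ldots,d_k,a+b}$ as the general fiber of a one-parameter degeneration whose central fiber has $X^n_{d_1,\ldots,d_k,a}$ and $X^n_{d_1,\ldots,d_k,b}$ as its two components, and then to feed the hypotheses directly into Lemma \ref{degeneration}. Concretely, I would fix very general forms $G_1,\ldots,G_k$ of degrees $d_1,\ldots,d_k$, a form $F_a$ of degree $a$, a form $F_b$ of degree $b$, and a form $F_{a+b}$ of degree $a+b$ on $\proj^{n+k+1}$, and set
$$ \calx := \{ \, ([x],t) \in \proj^{n+k+1} \times \A^1 \mid G_1(x)=\cdots=G_k(x)=0, \ F_a(x)F_b(x)+t\,F_{a+b}(x)=0 \, \} $$
with $f:\calx \rarw \A^1$ the second projection and $\call:=\calo(1)$ pulled back from $\proj^{n+k+1}$. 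For general choices the $k+1$ defining equations form a regular sequence, so $\calx$ is a complete intersection of the expected codimension; hence $f$ is flat by miracle flatness, $\call$ is $f$-ample, and the general fiber $X_t$ ($t\neq 0$), being cut out by $G_1,\ldots,G_k$ together with the degree $a+b$ form $F_aF_b+tF_{a+b}$, is a reduced irreducible complete intersection of degrees $d_1,\ldots,d_k,a+b$.

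The central fiber decomposes as $X_0=Y_1\cup Y_2$, where $Y_1=\{G_1=\cdots=G_k=F_a=0\}$ and $Y_2=\{G_1=\cdots=G_k=F_b=0\}$ are complete intersections of degrees $d_1,\ldots,d_k,a$ and $d_1,\ldots,d_k,b$. For very general $G_i,F_a,F_b$ both are reduced and irreducible, so $X_0$ is reduced along each $Y_i$ and hypothesis (i) of Lemma \ref{degeneration} holds. Moreover each $Y_i$ is a very general member of its family (one chooses the tuple so that both $(G_1,\ldots,G_k,F_a)$ and $(G_1,\ldots,G_k,F_b)$ are very general, which is possible since this only excludes a countable union of proper subvarieties), whence $\call|_{Y_1}=L_a$ and $\call|_{Y_2}=L_b$. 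Thus the assumed nefness (resp.\ ampleness) of $L_a(\mb_1)$ and $L_b(\mb_2)$ is exactly hypothesis (ii) with $\mb^{(1)}=\mb_1$, $\mb^{(2)}=\mb_2$, and Lemma \ref{degeneration} yields that $L_t(\mb_1,\mb_2)$ is nef (resp.\ ample) for very general $t$; the ample case runs verbatim through the ``resp.'' clauses.

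Finally I would transfer the conclusion to the genuinely very general complete intersection. Each such $X_t$ is a complete intersection of degrees $d_1,\ldots,d_k,a+b$, hence a member of the family of all such complete intersections whose very general member is $X^n_{d_1,\ldots,d_k,a+b}$. By the lower semicontinuity of Seshadri constants at very general points (\cite[Example 5.1.11]{La2}), the weighted Seshadri constant can only increase at the very general member, so the nefness (resp.\ ampleness) of $L_t(\mb_1,\mb_2)$ forces that of $L_{a+b}(\mb_1,\mb_2)$, which is the assertion.

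The main obstacle will be the general-position bookkeeping rather than any single deep step: one must arrange the forms $G_i,F_a,F_b,F_{a+b}$ so that \emph{simultaneously} (a) $\calx$ is a complete intersection of codimension $k+1$ so that $f$ is flat, (b) the general fiber is reduced, irreducible and general enough to represent $X^n_{d_1,\ldots,d_k,a+b}$, and (c) both $Y_1$ and $Y_2$ are very general in their respective families despite sharing the forms $G_i$. Each requirement is a routine Bertini/openness statement, but confirming they can be met at once, and that ``very general $t$ in this particular pencil'' together with the semicontinuity step genuinely recovers the statement for the abstract very general complete intersection, is the delicate point to get right.
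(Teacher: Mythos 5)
Your proposal is correct and is essentially the paper's own argument: the paper likewise degenerates $X^{n+k}_{a+b}$ to $X^{n+k}_a \cup X^{n+k}_b$ inside $\proj^{n+k+1}$, intersects with the complete intersection $X^{n+1}_{d_1,\ldots,d_k}$, and applies Lemma \ref{degeneration}. Your explicit pencil $F_aF_b + tF_{a+b}=0$ and the flatness/very-generality bookkeeping are just a fleshed-out version of the same one-line degeneration.
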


\begin{proof}
A very general hypersurface $ X^{n+k}_{a+b} \subset \proj^{n+k+1}$ of degree $a+b$ degenerates to
the union $X^{n+k}_a \cup X^{n+k}_b $ of hypersurfaces of degrees $a$ and $b$.
Thus $X^n_{d_1,\ldots,d_k,a+b}=X^{n+1}_{d_1,\ldots,d_k} \cap X^{n+k}_{a+b}$
degenerates to 
$X^{n+1}_{d_1,\ldots,d_k} \cap  (X^{n+k}_a \cup X^{n+k}_b) 
=X^{n}_{d_1,\ldots,d_k,a} \cup X^{n}_{d_1,\ldots,d_k,b}  $.
Applying Lemma \ref{degeneration} to this degeneration,
this lemma follows.
\end{proof}

As a corollary of Proposition \ref{a point in c.i.} and Lemma \ref{degeneration} or Lemma \ref{multi point in c.i.},
we obtain estimates for multi-point Seshadri constants
on hypersurfaces in projective spaces.

\begin{thm}[=Theorem \ref{intro thm 5}]\label{sc of hypersurfaces}
For a very general hypersurface $X^n_d$ of degree $d$ in $\proj^{n+1}$,
we have
$$\lfloor \sqrt[n]{d / (m_1^n+\cdots + m_r^n)} \rfloor \leq \ep(X^n_{d},\calo(1);\mb) \leq \sqrt[n]{d / (m_1^n+\cdots + m_r^n)} $$
for any $\mb=(m_1,\ldots,m_r)  \in \N^r \setminus 0.$
\end{thm}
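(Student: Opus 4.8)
The plan is to prove the two inequalities separately, disposing of the upper bound immediately and concentrating on the lower one. For the upper bound I would simply invoke Fact \ref{well_known_facts}~(1): since $X^n_d$ is an $n$-dimensional hypersurface of degree $d$ in $\proj^{n+1}$ we have $\calo(1)^n = d$, so the fact yields $\ep(X^n_d,\calo(1);\mb) \leq \sqrt[n]{d/(m_1^n+\cdots+m_r^n)}$ at once.

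For the lower bound, set $c = \lfloor \sqrt[n]{d/(m_1^n+\cdots+m_r^n)} \rfloor$. If $c=0$ there is nothing to prove, so assume $c \geq 1$; after discarding the indices with $m_i=0$ (which change neither side) we may assume $m_i \geq 1$, hence $a_i := c\,m_i \in \zp$ for all $i$. The defining inequality $c^n \leq d/(m_1^n+\cdots+m_r^n)$ rearranges to $a_1^n+\cdots+a_r^n \leq d$. By Definition \ref{sc at very gene}, the bound $\ep(X^n_d,\calo(1);\mb) \geq c$ is equivalent to the nefness of $\calo(1)(a_1,\ldots,a_r)$ on $X^n_d$, so it suffices to establish this nefness under the hypothesis $\sum_i a_i^n \leq d$.

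The engine runs as follows. By Proposition \ref{a point in c.i.} applied with degree $a_i^n$ we get $\ep(X^n_{a_i^n},\calo(1);1) \geq a_i$, that is, $\calo(1)(a_i)$ is nef on $X^n_{a_i^n}$. Feeding these one-point statements into Lemma \ref{multi point in c.i.} with $k=0$ and iterating the combination $r-1$ times, I obtain that $\calo(1)(a_1,\ldots,a_r)$ is nef on $X^n_D$, where $D := a_1^n+\cdots+a_r^n$. If $D=d$ this already finishes the argument. If $D<d$, I degenerate a very general degree-$d$ hypersurface to the union $X^n_D \cup X^n_{d-D}$ of two general hypersurfaces, in which $X^n_D$ is a reduced irreducible component; then Lemma \ref{degeneration}, applied while listing \emph{only} this single component with weight vector $(a_1,\ldots,a_r)$ and ignoring $X^n_{d-D}$, transports the nefness of $\calo(1)(a_1,\ldots,a_r)$ from $X^n_D$ up to the general fiber $X^n_d$.

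Each step is short because the substance has been packaged into the earlier results, so the delicate part is bookkeeping. The two points to get right are: first, checking that $c^n(m_1^n+\cdots+m_r^n)\leq d$ is exactly $\sum_i a_i^n \leq d$, so that the integerized weights $a_i=c\,m_i$ fit the hypotheses of Proposition \ref{a point in c.i.} and Lemma \ref{multi point in c.i.}; and second, justifying that one may apply Lemma \ref{degeneration} to a single prescribed component of the central fiber while simply discarding the complementary component $X^n_{d-D}$. This final transport, which is needed precisely when $\sqrt[n]{d/(m_1^n+\cdots+m_r^n)}$ is not itself realized with equality, is the main thing that has to be argued carefully rather than quoted verbatim.
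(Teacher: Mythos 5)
Your proof is correct and follows essentially the same route as the paper: the upper bound via Fact \ref{well_known_facts}~(1), and the lower bound by degenerating $X^n_d$ to a union of very general hypersurfaces of smaller degrees, each handled by Proposition \ref{a point in c.i.} with weight $cm_i$, then transported back by Lemma \ref{degeneration}. The only cosmetic difference is that the paper partitions $d=d_1+\cdots+d_r$ with $d_i\geq(cm_i)^n$ and applies Lemma \ref{degeneration} once to the $r$-component central fiber, whereas you iterate Lemma \ref{multi point in c.i.} and absorb the leftover degree $d-\sum(cm_i)^n$ in a final two-component degeneration; both uses are sanctioned by the ``other components may exist'' clause of Lemma \ref{degeneration}.
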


\begin{proof}
The second inequality is clear since $\calo_{X^n_d}(1)^n=d$.
Thus it remains to prove the first inequality.
For simplicity, set $c=\lfloor \sqrt[n]{d / (m_1^n+\cdots + m_r^n)} \rfloor$. 
Let $d_1,\ldots,d_r$ be natural numbers such that $d=d_1+\ldots + d_r$
and $d_i \geq (c m_i)^n $.
We can choose such $d_i$
because $d \geq \sum_i (c m_i)^n$. 
By Proposition \ref{a point in c.i.},
$\ep(X^n_{d_i},\calo(1);1) \geq c m_i$ holds.
Note that $c m_i$ is an integer.
Since $X^n_d$ degenerates to $\bigcup_{i=1}^r X^n_{d_i}$,
we can apply Lemma \ref{degeneration}
and we know $L_d(c m_1,\ldots,c m_r)$ is nef,
where $L_{d}$ is the invertible sheaf $\calo(1)$ on $X^n_d$.
Hence $\ep(X^n_d,\calo(1);\mb) \geq c$ holds.
\end{proof}

\begin{rem}
Theorem \ref{sc of hypersurfaces} tells us an interesting property
of the nef cone $\Nef(\widetilde{X})$ of the blowing up $\pi : \widetilde{X} \rarw X$
along very general points $p_1,\ldots,p_r$ in $X=X^n_d$.
Set $E_i=\pi^{-1}(p_i)$ be the exceptional divisor over $p_i$.

We identify $\R^r$ with the affine hyperplane $\pi^* \calo_X(1) + \bigoplus_{i=1}^r \R E_i \subset N^1(\widetilde{X})$
by sending $\mb=(m_1,\ldots,m_r) \subset \R^r$ to $\pi^* \calo_X(1) - \sum_{i=1}^r m_i E_i $.
Under this identification,
$\Nef(\widetilde{X}) \cap \R^r$ is equal to $ \{ \, \mb \in \R^r \, | \, \pi^* \calo_X(1) - \sum_{i=1}^r m_i E_i \ \text{is nef} \, \}$.
Since the self intersection number of a nef line bundle is non-negative,
we have $\Nef(\widetilde{X}) \cap \R^r \subset \Delta^n_d:= \{ \, \mb \in \r+^r \, | \,  \sum_{i=1}^r m_i^n \leq d \} $.

Theorem \ref{sc of hypersurfaces} can be rephrased as
$$ \conv ( \N^r \cap \Delta^n_d ) \subset  \Nef(\widetilde{X}) \cap \R^r \subset  \Delta^n_d.$$
Since $\Delta^n_d= \sqrt[n]{d} \, \Delta^n_1 $,
$\Nef(\widetilde{X}) \cap \R^r$ is ``close to'' $\Delta^n_d$ for $d \gg n,r$.
\end{rem}

\subsection{Fano 3-folds with Picard number one}

\vspace{1mm}
In this subsection,
we estimate Seshadri constants on a smooth Fano 3-fold $X$ with Picard number $1$.
In other words,
$X$ is a smooth projective variety of dimension $3$ such that $-K_X$ is ample and $\Pic X \cong \Z$.
The \textit{index} of $X$ is the positive integer $r$ such that $-K_X=rH$,
where $H \in \Pic X$ is the ample generator.

Toric degenerations of Fano 3-folds are studied by many authors.
Small toric degenerations of Fano 3-folds are treated by \cite{Gal},
and \cite{CI} investigated complete intersection cases in (weighted) projective spaces
and homogeneous spaces.
In \cite{ILP},
Ilten, Lewis, and Przyjalkowski studied remaining cases of Fano 3-folds with Picard number $1$.
They showed that
every smooth Fano 3-fold with Picard number 1 has a toric degeneration
and gave an explicit description of the moment polytope of the central fiber.
Most of the degenerations in \cite{ILP} give good lower bounds for Seshadri constants.

\vspace{.5mm}

\begin{eg}\label{(6) in P(1,1,1,1,3)}
Let $X \subset \proj(1,1,1,1,3)$ be a very general hypersurface of degree $6$.
By \cite[First Main Theorem]{ILP},
$(X,\calo(1))$ degenerates to $(X_P,L_P)$ as a $\Q$-polarized variety
for $P:=\conv ( e_1,e_2,e_3,-1/3(e_1+e_2+e_3)) \subset \R^3$.
Thus we have $\ep(X,\calo(1);1) \geq \ep(X_P,L_P;1) \geq 6/5$
by Example \ref{ex of toric at maximal orbit} and Lemma \ref{degeneration}.

Although we can show $\ep(X,\calo(1);1) \leq 6/5$ by the method used in Example \ref{examples of c.i. Fano at a point},
we present a more geometric argument (though both proofs are essentially same).

Fix a very general point $p \in X$.
Define $p' \in X$ by
$\{p,p'\} := \varphi ^{-1} (\varphi(p))$,
where $\varphi:X \rarw \proj^3$ is the double cover defined by $|\calo_X(1)|$. 
Since $\dim H^0(X,\calo_X(3))=21$ and $\dim \calo_X / \m _p ^4=20$,
there exists $S \in |\calo_X(3) \otimes \m _p ^4|$.
Then $\mult_p(S)=4$ because $X$ and $p$ are very general.
It is not hard to see that $S$ does not contain $p'$.
Let $\pi:\widetilde{X} \rarw X$ be the blowing up at $\{p,p'\}$,
and set $E,E'$ to be the exceptional divisors over $p$ and $p'$ respectively.
Let $\widetilde{S} \subset \widetilde{X}$ be the strict transform of $S$,
and set $\psi=\varphi |_{\widetilde{S}} : \widetilde{S} \rarw S$ and
$F=E |_{\widetilde{S}}$.
Then $F^2=-\mult_p(S)=-4 $.
Since $\varphi^* \calo_X(1) -E-E'$ is base point free,
so is $(\varphi^* \calo_X(1) -E-E') |_{\widetilde{S}}=\psi^* \calo_S(1) -F$.
Let $f : \widetilde{S} \rarw \proj^2$ be the morphism defined by $\varphi^* \calo_S(1) -F$.
By $\ep(S,\calo(1);p) \geq \ep(X,\calo(1);p) >1$,
we know $\varphi^* \calo_S(1) -F$ is ample.
Thus $f$ is a finite morphism.
Since $f_*F.\calo_{\proj^2}(1)=F.f^*\calo_{\proj^2}(1)=F.(\varphi^* \calo_S(1) -F)=4$,
we have $f_*F \sim \calo_{\proj^2}(4)$.
Thus $D:=f^*f_*F -F \sim \psi^* \calo_S(4) - 5F$ is an effective divisor.
Hence $\psi^*\calo_S(1)-6/5 F$ is not ample
because $D. (\psi^*\calo_S(1)-6/5 F)=(\psi^* \calo_S(4) - 5F).(\psi^*\calo_S(1)-6/5 F)
=0$.
Thus $\ep(X,\calo(1);1) = \ep(X,\calo(1);p) \leq \ep(S,\calo(1);p) \leq 6/5$ holds
and we have $\ep(X,\calo(1),1)=6/5$.
\end{eg}

It is known that there are 17 families of smooth Fano 3-folds with Picard number $1$.
For each case,
we can compute the Seshadri constant as follows.

\begin{thm}[=Theorem \ref{intro thm 4}]\label{Fano 3-fold}
For each family of smooth Fano 3-folds with Picard number $1$,
$\ep(X,-K_X;1)$ is as in Table \ref{table},
where $X$ is a very general member in the family.
\end{thm}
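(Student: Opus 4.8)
The plan is to handle each of the $17$ families in Table \ref{table} individually, organizing the work by the two inequalities $\ep(X,-K_X;1)\geq v$ and $\ep(X,-K_X;1)\leq v$ for the tabulated value $v$. For the lower bounds I would use the toric degeneration strategy encapsulated in Lemma \ref{degeneration}: for each family, invoke the toric degenerations of \cite{Gal}, \cite{CI}, and \cite{ILP} to degenerate $(X,-K_X)$ (or a multiple making $-K_X$ Cartier) to a polarized toric variety $(X_P,L_P)$ whose moment polytope $P$ is explicitly given, and then estimate $\ep(X_P,L_P;1_P)$ from below using Theorem \ref{estimation by projection}, typically via the closed-form lower bound of Example \ref{ex of toric at maximal orbit}. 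By the lower semicontinuity of Seshadri constants this yields $\ep(X,-K_X;1)\geq\ep(X_P,L_P;1_P)$. Several families (complete intersections in $\proj^{n+k}$, or hypersurfaces in weighted projective spaces) are already covered by the machinery of Proposition \ref{a point in c.i.} and Examples \ref{examples of c.i. Fano at a point} and \ref{(6) in P(1,1,1,1,3)}, so those lower bounds can be quoted directly.

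For the upper bounds I would use Fact \ref{well_known_facts}. The crude bound $\ep(X,-K_X;1)\leq\sqrt[3]{(-K_X)^3}$ from Fact \ref{well_known_facts}(1) already pins down several entries exactly: for instance $\proj^3$ (No.\ 17) gives $\sqrt[3]{64}=4$ and the quadric (No.\ 16) gives $\ep=3$ once the matching lower bound is in hand, while for the index-$2$ families with $(-K_X)^3=8\cdot j$ one has $-K_X=2H$ with $H^3=j$, so $\ep(X,-K_X;1)=2\,\ep(X,H;1)$ and the bound $\ep(X,H;1)\leq\sqrt[3]{j}$ combines with a lower bound $\ep(X,H;1)\geq 1$ to force $\ep(X,-K_X;1)=2$. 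For the index-$1$ families where $-K_X=H$ is primitive, the volume bound alone is not sharp, and here I would produce an explicit curve $C\ni p$ through a very general point realizing the ratio $C.(-K_X)/\mult_p(C)=v$ and apply Fact \ref{well_known_facts}(3); this is exactly the style of argument already carried out for cubic surfaces and in Examples \ref{examples of c.i. Fano at a point} and \ref{(6) in P(1,1,1,1,3)}.

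I expect the main obstacle to lie in the upper bounds for the index-$1$ families of higher anticanonical degree, namely Nos.\ 4 through 10 (and the genus-computations $X_{12},X_{16},X_{18},X_{22}$), where the claimed value is exactly $2$. For these the volume bound $\sqrt[3]{(-K_X)^3}$ is strictly larger than $2$, so it does not suffice; one must instead exhibit a concrete subvariety or curve forcing $\ep\leq 2$. For the complete-intersection and Grassmannian-section cases one can use a line or a suitably chosen complete-intersection curve through a very general point, or restrict to a hyperplane/anticanonical section $Y\subset X$ and invoke Fact \ref{well_known_facts}(2) together with a surface computation; for the exceptional cases $X_{12},\dots,X_{22}$ the relevant geometry and existence of lines is extracted from the description in \cite{IsP}. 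The bulk of the proof is therefore a family-by-family verification: for each row, cite the appropriate toric degeneration for the lower bound, and supply the matching curve or volume argument for the upper bound, after which the two inequalities agree and produce the tabulated value.
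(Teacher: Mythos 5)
Your lower-bound strategy coincides with the paper's: degenerate each family to a toric variety via \cite{Gal}, \cite{CI}, \cite{ILP} and apply Lemma \ref{degeneration} together with Theorem \ref{estimation by projection} and Example \ref{ex of toric at maximal orbit} (the paper does need one ad hoc degeneration for No.~11, a hypersurface of degree $6$ in $\proj(1,1,1,2,3)$, which is not hard to supply). The problems are in your upper bounds, and several of the arguments you propose would actually fail. First, for the quadric (No.~16) the volume bound gives $\ep\leq\sqrt[3]{(-K_X)^3}=\sqrt[3]{54}=3\sqrt[3]{2}\approx 3.78$, not $3$; the paper instead uses that the quadric is covered by lines, so $\ep(X,H;1)\leq 1$ and $\ep(X,-K_X;1)=3\,\ep(X,H;1)\leq 3$. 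Second, for the index-$2$ families Nos.~11--15 you claim $\ep(X,H;1)\leq\sqrt[3]{H^3}$ forces $\ep(X,H;1)=1$; but $H^3$ ranges over $1,\ldots,5$, so for $H^3\geq 2$ the volume bound exceeds $1$ and forces nothing. The paper's argument here is genuinely different: if $\ep(X,-K_X;1)>2$ then $-K_{\widetilde X}=2(\mu^*H-E)$ would be ample, making the blow-up $\widetilde X$ a Del Pezzo $3$-fold of Picard number $2$, and the classification of Del Pezzo manifolds forces $(\mu^*H-E)^3\in\{6,7\}$, contradicting $(\mu^*H-E)^3=H^3-1\leq 4$. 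You would need this (or an explicit covering family of $H$-degree-one curves) to close those five rows.

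Third, for the index-$1$ families Nos.~5--10 you suggest using ``a line'' through a very general point extracted from \cite{IsP}. A line through a very general point would give $\ep\leq 1$, contradicting the lower bound $2$ you have just established; these Fano $3$-folds are not covered by lines. The correct fact, which the paper cites from \cite[Chapter 4]{IsP}, is that they are covered by \emph{conics}, i.e.\ rational curves $C$ with $C.(-K_X)=2$, which gives exactly $\ep\leq 2$. Your instinct that an explicit low-degree curve is needed in these rows is right, but as written the argument either proves too much or is not pinned down. Nos.~1--4 are indeed handled by Examples \ref{examples of c.i. Fano at a point} and \ref{(6) in P(1,1,1,1,3)} as you say, and No.~17 does follow from the volume bound; the remaining twelve rows need the repairs above.
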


\begin{proof}
For No.1\,-\,4 in Table \ref{table},
$\ep(X,-K_X;1)$ is computed in Examples \ref{examples of c.i. Fano at a point} and \ref{(6) in P(1,1,1,1,3)}.
(In fact,
degenerations in \cite{ILP} for No.2\,-\,4 give same lower bounds as Example \ref{examples of c.i. Fano at a point}
though some of their degenerations are different from those in Example \ref{examples of c.i. Fano at a point}.)

For No.5\,-\,17,
we can show the following by using toric degenerations.
\begin{align}
\tag{$\ddagger$}
\ep(X,-K_X;1) \geq 
\left\{
\begin{array}{cl}
 2 &  \mbox{for No.5\,-\,15}  \\
 3 &  \mbox{for No.16} \\
 4 &  \mbox{for No.17.}
\end{array}
\right .
\end{align}
Except for No.11,
these lower bounds are obtained by
applying Theorem \ref{estimation by projection} and Lemma \ref{degeneration}
to the degenerations in \cite[First Main Theorem]{ILP}.
In No.11 case,
we consider the following degeneration,
whose construction is essentially same as Proposition \ref{a point in c.i.}.

Let $T_0,T_1,T_2,T_3,T_4$ be weighted homogeneous coordinates on $\proj(1,1,1,2,3)$
with $\deg T_0=\deg T_1 = \deg T_2=1, \deg T_3=2, \deg T_4=3$.
Then $X_0:=\Zeros (T_4^2 - T_1^2 T_2^2 T_3) \subset \proj(1,1,1,2,3)$ is a non-normal toric variety
whose moment polytope is $P= \conv (0,e_1,e_2, -e_1 -e_2 +e_3)$.
A very general hypersurface $X$ in $\proj(1,1,1,2,3)$ of degree $6$
degenerates to $X_0$.
Since $-K_X=\calo_X(2)$,
$(X,-K_X)$ degenerates to $(X_0,\calo_{X_0}(2))$.
Thus $\ep(X,-K_X;1) \geq \ep(X_0,\calo_{X_0}(2);1)= \ep( X_{2P},L_{2P};1) =2$.

Next,
we think about the upper bounds.
For No.5\,-\,10,
it is known that $X$ is covered by conics,
i.e.,
for any general $p \in X$,
there exists a smooth rational curve $C$ containing $p$ such that $C.(-K_X)=2$ (cf.\ \cite[Chapter 4]{IsP}).
Thus $\ep(X,-K_X;1) \leq 2$ in these cases.
For No.11\,-\,15,
$-K_X=2H$ holds for the ample generator $H$.
Assume that $\ep(X,-K_X;1) > 2$,
i.e.,
$-K_{\widetilde{X}}=\mu^*(-K_X)-2E=2(\mu^*H-E)$ is ample
for the blowing up $\mu : \widetilde{X} \rarw X$ at a very general point $p \in X$ and $E=\mu^{-1}(p)$.
Then $\widetilde{X}$ is a Fano 3-fold of index $2$,
i.e.,
a Del Pezzo 3-fold,
and the Picard number is $2$.
By the classification of Del Pezzo manifolds (cf.\ \cite[\S 12.1]{IsP}),
$(-K_{\widetilde{X}})^3=8 (\pi^*H-E)^3$ must be $8 \cdot 6$ or $8 \cdot 7$,
which contradicts $H^3 \leq 5$.
Thus $\ep(X,-K_X;1) \leq 2 $ holds for No.11\,-\,15.
For No.16 and 17,
$X$ is covered by lines
since $X$ is a smooth quadric or $\proj^3$.
Hence $\ep(X,-K_X;1) = r \, \ep(X,H;1) \leq r $ holds for the index $r$
and the ample generator $H$ for No.16 and 17.

Thus the inequalities in $(\ddagger)$ are in fact equalities,
and the proof is completed.
\end{proof}

\end{document}